\newlength{\itemLM} \setlength{\itemLM}{2.2em}
\setlist{leftmargin=\itemLM}
\numberwithin{equation}{section}
 \newcommand{\mpar}{\par\medskip} \newcommand{\bpar}{\par\bigskip}
\newtheorem{lem}[equation]{Lemma}
\newcommand{\newterm}{\textit}
\newtheorem{thm}[equation]{Theorem}
\newtheorem{cor}[equation]{Corollary}
\newenvironment{proofX}[1]{\par\noindent\textit{Proof #1.}}{\hfill $\Box$\bpar}
\newcommand{\FormelQed}{\vskip-\belowdisplayskip \vskip-\baselineskip}
\newcommand{\nbd}{\nobreakdash-}
\newcommand{\cC}{\mathcal{C}}   \newcommand{\cF}{\mathcal{F}}   \newcommand{\cL}{\mathcal{L}} \newcommand{\cM}{\mathcal{M}} \newcommand{\cO}{\mathcal{O}}  \newcommand{\cR}{\mathcal{R}}
\newcommand{\am}{\mathfrak{m}}     
\newcommand{\sC}{\mathscr{C}} \newcommand{\sD}{\mathscr{D}}     \newcommand{\sH}{\mathscr{H}} \newcommand{\sJ}{\mathscr{J}}       
  \newcommand{\CC}{\mathbb{C}} \newcommand{\CP}{\mathbb{CP}}  \newcommand{\NN}{\mathbb{N}} \newcommand{\PP}{\mathbb{P}}  \newcommand{\RR}{\mathbb{R}} \newcommand{\ZZ}{\mathbb{Z}}
\newcommand{\ii}{\mathrm{i}}  
 \newcommand{\cpt}{\mathrm{cpt}}   \newcommand{\dom}[1][]{\mathrm{dom}_{#1}\,}     \newcommand{\loc}{\mathrm{loc}} \newcommand{\pr}{\mathrm{pr}}         
    \DeclareMathOperator{\mult}{mult} \DeclareMathOperator{\ord}{ord}     \DeclareMathOperator{\Reg}{Reg} \DeclareMathOperator{\Sing}{Sing}      \DeclareMathOperator{\supp}{supp}    \DeclareMathOperator{\qhodge}{\quer\ast}
\newcommand{\wcO}{\widehat\cO}
\newcommand{\quer}{\overline} \newcommand{\dbar}{\overline{\partial}}  \newcommand{\dq}{\overline{\partial}}
\newcommand{\cf}{cf.\ } \newcommand{\eg}{e.\,g.\ } \newcommand{\ie}{i.\,e., } 
\newcommand{\aus}{\subset}  \newcommand{\um}{\supset}  \newcommand{\minus}{\setminus} \newcommand{\leer}{\varnothing} \newcommand{\durch}{\cap} \newcommand{\ver}{\cup}     \newcommand{\kreuz}{\times}     \newcommand{\tensor}{\otimes} \newcommand{\genau}{\Leftrightarrow}
\newcommand{\abb}{\rightarrow} \newcommand{\auf}{\mapsto} \newcommand{\iso}{\cong} \newcommand{\nach}{\circ}
\newcommand{\kl}{\leq} \newcommand{\gr}{\geq} \newcommand{\ungl}{\neq}
\newcommand{\eps}{\varepsilon} \newcommand{\ph}{\varphi}  \newcommand{\thet}{\vartheta} 
\newcommand{\spfrac}[2]{{}^{#1}\!/\!_{#2}}
\newcommand{\konv}[2]{\hbox{ if }#1\abb #2}
\newcommand{\cOL}{\cO_{L^2}(\Delta)} \newcommand{\cOLm}[1]{t^{#1}\cOL} \newcommand{\cOmL}{\Omega^1_{L^2}(\Delta)} \newcommand{\cOmLm}[1]{t^{#1}\cOmL}
\def\newrefformat#1#2{%
  \@namedef{pr@#1}##1{#2}}
\def\prettyref#1{\@prettyref#1:}
\def\@prettyref#1:#2:{%
  \expandafter\ifx\csname pr@#1\endcsname\relax%
    \PackageWarning{prettyref}{Reference format #1\space undefined}%
    \ref{#1:#2}%
  \else%
    \csname pr@#1\endcsname{#1:#2}%
  \fi%
}
\def\section{\@startsection{section}{1}%
  \z@{1.2\linespacing\@plus\linespacing}{.7\linespacing}%
  {\large\normalfont\bf\centering}}
\title[$L^2$-Riemann-Roch for singular complex curves]  {$L^2$-Riemann-Roch for singular complex curves}
\author[J. Ruppenthal and M. Sera]{Jean Ruppenthal and Martin Sera}
\address{Department of Mathematics, University of Wuppertal, Gau{\ss}str. 20, 42119 Wuppertal, Germany.}
\email{ruppenthal@uni-wuppertal.de, sera@math.uni-wuppertal.de}
\date{\today}
\subjclass[2010]{32W05, 32C36, 14C40}
\begin{document}
\begin{abstract}
We present a comprehensive $L^2$-theory for the $\dbar$-operator on singular complex curves,
including $L^2$-versions of the Riemann-Roch theorem and some applications.
\end{abstract}

\maketitle

\vspace{-1mm} 
\section{Introduction}

The $L^2$-theory for the $\dbar$-operator is one of the central tools in complex analysis on complex manifolds, but still not very well developed on singular complex spaces. Just recently, considerable progress has been made in understanding the $L^2$-cohomology of singular complex spaces with isolated singularities. Let $X$ be a Hermitian complex space of pure dimension $n$ and with isolated singularities only. For simplicity, we assume that $X$ is compact. Let $H^{p,q}_w(X^*)$ be the $L^2$-Dolbeault cohomology on the level of $(p,q)$-forms with respect to the $\dq$-operator in the sense of distributions, denoted by $\dbar_w$ in the following, computed on $X^*=\Reg X$. Let $\pi: M\rightarrow X$ be a resolution of singularities with snc exceptional divisor, $Z:=\pi^{-1}(\Sing X)$ the unreduced exceptional divisor. Then it has been shown by {\O}vrelid, Vassiliadou \cite{OvrelidVassiliadou13} and the first author \cite{Ruppenthal11,Ruppenthal14} by different approaches that there exists 
an effective divisor $D\geq Z-|Z|$ on $M$ such that there are
natural isomorphisms
	\begin{equation}\label{eq:history}\begin{split}
	H^{n,q}_w(X^*) &\cong H^{n,q}(M)\ ,\\
	H^{0,q}_w(X^*) &\cong \frac{H^q\big(M,\cO(D)\big)}{H^q_{|Z|}\big(M,\cO(D)\big)}
	\end{split}\end{equation}
for all $0\leq q\leq n$. 
Here, $H^q_{|Z|}$ denotes the cohomology with support on $|Z|$.
If $\dim X \leq 2$, then \eqref{eq:history} holds with the divisor $D=Z-|Z|$
and $H^q_{|Z|}\big(M,\cO(Z-|Z|)\big)=\{0\}$,
so that \eqref{eq:history} gives a nice smooth representation of the $L^2$-cohomology groups $H^{0,q}_w(X^*)$.
In case $\dim X>2$, it is conjectured that \eqref{eq:history} holds with $D=Z-|Z|$ (see \cite{Ruppenthal11}).

\medskip
However, the $L^2$-theory for the $\dbar$-operator developed in \cite{OvrelidVassiliadou13} and \cite{Ruppenthal11,Ruppenthal14} applies
only to $\dim X\geq 2$ (for $\dim X=1$, \eqref{eq:history} has been known before, see \cite{Pardon89, PardonStern91}).
The purpose of the present paper is to give a complete $L^2$-theory for the $\dq$-operator on a singular complex curve,
including $L^2$-versions of the Riemann-Roch theorem, and to understand the appearance of the divisor $Z-|Z|$
in the case $\dim X=1$.

\medskip
Let us explain some of our results in detail. Let $X$ be a Hermitian singular complex space\footnote{ A Hermitian complex space $(X,g)$ is a reduced complex space $X$ with a metric $g$ on the regular part such that the following holds: If $x\in X$ is an arbitrary point there exist a neighborhood $U=U(x)$, a holomorphic embedding of $U$ into a domain $G$ in $\CC^N$ and an ordinary smooth Hermitian metric in $G$ whose restriction to $U$ is $g|_U$.} of dimension $1$, \ie a Hermitian complex curve, and $L\rightarrow X$ a Hermitian holomorphic line bundle. Let $\dbar_w:L^{p,q}(X^\ast,L)\abb L^{p,q+1}(X^\ast,L)$ denote the weak extension of the Cauchy-Riemann operator $\dbar:\sD^{p,q}(X^\ast, L)\abb\sD^{p,q+1}(X^\ast, L)$, \ie the $\dbar$-operator in the sense of distributions. Here, $\sD^{p,q}(X^\ast, L):=\sC^\infty_{\cpt}(X^\ast, \Lambda^{p,q}T^\ast X^\ast\tensor L)$ denotes the set of smooth differential forms with compact support in $X^\ast$ and values in $L$ and $L^{p,q}(X^\ast,L)$ is the set of square-integrable forms with values in $L$ and respect to the Hermitian metrics on $X^\ast$ and $L$.

Let $H_{w}^{p,q}(X^\ast, L)$ denote the $L^2$-Dolbeault cohomology on $X^*$ with respect to $\dbar_w$
and $h_w^{p,q}(X^*,L):= \dim H_{w}^{p,q}(X^\ast, L)$.
Note that the genus $g=g(X)$ of $X$ and the degree $\deg(L)$ of $L$ are well-defined,
even in the presence of singularities (see Section \ref{sec:resolution}).
For a singular point $x\in\Sing X$, we define its modified multiplicity $\mult_x' X$ as follows:
Let $X_j$, $j=1, ..., m$, be the irreducible components of $X$ in the singular point $x$.
Then 
$$\mult_x' X:= \sum_{j=1}^m \left(\mult_x X_j -1\right).$$
Note that regular irreducible components do not contribute to $\mult'_x X$.
In \prettyref{sec:resolution}, we recall the definition of  the multiplicity $\mult_x X_j$ and  present different ways to compute it.

\begin{thm}[$\dbar_w$-Riemann-Roch]\label{thm:RR-w}
Let $X$ be a compact Hermitian complex curve with $m$ irreducible components and $L\abb X$ a holomorphic line bundle. Then
\begin{equation}\label{eq:RR-w1}
h_w^{0,0}(X^\ast,L)-h_w^{0,1}(X^\ast,L)=m-g+\deg(L) + \sum_{x\in\Sing X} \mult_x' X,
\end{equation}
and
\[h_w^{1,1}(X^\ast,L)-h_w^{1,0}(X^\ast,L)=m-g-\deg(L).\]
\end{thm}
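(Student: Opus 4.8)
The plan is to reduce both identities to the classical Riemann--Roch theorem on the resolution $\pi\colon M\to X$ by means of the isomorphisms \eqref{eq:history}, twisted by the line bundle $L$. Since $\dim X=1$, the resolution is just the normalization, so $M$ is a smooth compact Riemann surface whose connected components $M_1,\dots,M_m$ correspond to the $m$ irreducible components of $X$; write $g_i$ for the genus of $M_i$, so that $g=\sum_i g_i$. Because $\dim X\kl 2$, \eqref{eq:history} holds with $D=Z-|Z|$ and with vanishing support cohomology, and because $L$ is a \emph{smooth} Hermitian line bundle it is locally trivial near the finitely many singular points; twisting by $L$ and invoking the Dolbeault isomorphism therefore gives
\begin{align*}
 H_w^{1,q}(X^\ast,L)&\iso H^q\big(M,\Omega^1_M\tensor\pi^\ast L\big),\\
 H_w^{0,q}(X^\ast,L)&\iso H^q\big(M,\cO(Z-|Z|)\tensor\pi^\ast L\big)
\end{align*}
for $q=0,1$.

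For the $(1,\bullet)$-identity I would apply Riemann--Roch to $\Omega^1_M\tensor\pi^\ast L$ on each component and sum, obtaining $h^0-h^1=\deg(\Omega^1_M\tensor\pi^\ast L)+m-g$ with $h^0=h_w^{1,0}$ and $h^1=h_w^{1,1}$. Since $\deg\Omega^1_M=\sum_i(2g_i-2)=2g-2m$ and $\deg\pi^\ast L=\deg L$, this rearranges to $h_w^{1,1}-h_w^{1,0}=m-g-\deg L$ at once. For the $(0,\bullet)$-identity the same argument gives
\[
 h_w^{0,0}-h_w^{0,1}=\deg\big(\cO(Z-|Z|)\tensor\pi^\ast L\big)+m-g=\deg(Z-|Z|)+\deg L+m-g,
\]
so everything comes down to identifying $\deg(Z-|Z|)$ with $\sum_{x\in\Sing X}\mult_x'X$.

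I expect this last degree computation to be the crux. The point is to read off the multiplicity of the unreduced divisor $Z=\pi^{-1}(\Sing X)$ at each preimage of a singular point. Over $x\in\Sing X$ with local branches $X_1,\dots,X_{m_x}$, the reduced maximal ideal $\am_x$ pulls back, along the normalization $t\auf(t^{\mult_x X_j}+\cdots)$ of the branch $X_j$ recalled in \prettyref{sec:resolution}, to the ideal $(t^{\mult_x X_j})$ in the discrete valuation ring $\cO_{M,p_j}$; hence $Z$ has multiplicity $\mult_x X_j$ at that point while $|Z|$ contributes $1$. Summing $\mult_x X_j-1$ over all branches at all singular points gives exactly $\sum_x\mult_x'X$, which closes the argument. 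The remaining steps---the twisted form of \eqref{eq:history} and the compatibility of the conventions for $g$, $\deg L$, and $\mult_x X_j$ with their counterparts on $M$---are established in \prettyref{sec:resolution} and the preceding sections, so the two formulas then follow formally.
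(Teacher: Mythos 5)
Your proposal follows essentially the same route as the paper: the twisted isomorphisms you posit are exactly the content of Theorem~\ref{thm:computedCo} (proved there via the local pullback computations \eqref{eq:pullback}--\eqref{eq:dbarw}, a fine resolution of $\Omega^p(E)$, and Serre duality), and the paper then derives Theorem~\ref{thm:RR-w} precisely as you do, by classical Riemann--Roch on each connected component of $M$ together with the identities $g(M)=g(X)$, $\deg\pi^\ast L=\deg L$ and $\sum_{x}\mult_x' X=\deg(Z-|Z|)$. The one place you are too quick is in ``twisting \eqref{eq:history} by $L$'': that cannot be done at the level of the stated global isomorphism alone, but requires the sheaf-level identification of the $\dbar_w$-domains near the singular points (where the local triviality of $L$ genuinely enters), which is exactly what the proof of Theorem~\ref{thm:computedCo} supplies.
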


Theorem \ref{thm:RR-w} is a corollary of \prettyref{thm:computedCo} which we prove in Section \ref{sec:main}. We also consider an $L^2$-dual version there, \ie an $L^2$-Riemann-Roch theorem for the minimal closed $L^2$-extension of the $\dbar$-operator which we denote by $\dbar_s$ (see Section \ref{ssec:dbar}).

On singular complex curves, the $\dbar_s$-operator is of particular importance because of its relation to weakly holomorphic functions.
Namely, the weakly holomorphic functions are precisely the $\dbar_s$-holomorphic $L^2_\loc$-functions
(for a localized version of the $\dbar_s$-operator, see Section \ref{sec:weakly}).
Let $H_{s,\loc}^{p,q}(X^\ast)$ denote the $L^2_\loc$-Dolbeault cohomology on $X^*$ with respect to $\dbar_s$,
and $\widehat\cO_X$ the sheaf of germs of weakly holomorphic functions on $X$. Then:

\begin{thm}\label{thm:main2}
Let $X$ be a Hermitian complex curve. Then
\begin{eqnarray*}
H^0(X,\widehat\cO_X) &=& H^{0,0}_{s,\loc}(X^*),\\
H^1(X,\widehat\cO_X) &\cong& H^{0,1}_{s,\loc}(X^*).
\end{eqnarray*}
\end{thm}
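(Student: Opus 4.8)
The plan is to realize $\widehat\cO_X$ as the kernel of a \emph{fine} resolution built from the localized $\dbar_s$-complex and then to invoke the de Rham--Weil isomorphism. For an open set $U\aus X$ write $U^*:=U\durch X^*$ and let $\sL^{0,q}_s$ be the sheaf on $X$ associated to the presheaf
\[
U \mapsto \bigl\{\, f \in L^{0,q}_{\loc}(U^*) : f \in \dom[\loc]\dbar_s \,\bigr\},
\]
i.e.\ the sheaf of germs of $L^2_\loc$-forms lying locally in the domain of the minimal extension $\dbar_s$. Since on a curve every $(0,1)$-form is automatically $\dbar_s$-closed, the candidate resolution is the two-term complex
\[
0 \to \widehat\cO_X \to \sL^{0,0}_s \xrightarrow{\ \dbar_s\ } \sL^{0,1}_s \to 0 .
\]
By construction the cohomology of the complex of global sections $\Gamma(X,\sL^{0,\bullet}_s)$ is precisely $H^{0,\bullet}_{s,\loc}(X^*)$, so once I show \textup{(i)} that this is an exact sequence of sheaves with the indicated kernel and \textup{(ii)} that the $\sL^{0,q}_s$ are acyclic, the theorem follows, with degree $0$ giving the asserted \emph{equality} of global sections and degree $1$ the isomorphism.

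Exactness at $\sL^{0,0}_s$ is immediate: the kernel of $\dbar_s$ consists exactly of the $L^2_\loc$-functions annihilated by the minimal extension, which by the identification recalled above (weakly holomorphic $=\dbar_s$-holomorphic $L^2_\loc$) is $\widehat\cO_X$; taking global sections already yields the first line of the theorem. For acyclicity, the sheaves $\sL^{0,q}_s$ are modules over the soft sheaf $\sC^\infty_X$: if $\chi$ is smooth on $X$ and $f\in\dom[\loc]\dbar_s$, then approximating $f$ in graph norm by forms in $\sD^{0,q}(X^*)$ and multiplying by $\chi$ shows $\chi f\in\dom[\loc]\dbar_s$ with $\dbar_s(\chi f)=\dbar\chi\wedge f+\chi\,\dbar_s f$, using that $\chi$ and $\dbar\chi$ are bounded near $\Sing X$. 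Hence $\sL^{0,q}_s$ admits $\sC^\infty$ partitions of unity, is fine, therefore soft, therefore acyclic on the paracompact space $X$.

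The main obstacle is the remaining exactness, namely local $L^2_\loc$-solvability of $\dbar_s u=f$ at the points of $\Sing X$ (at regular points this is the classical Dolbeault--Grothendieck lemma in $L^2$). Here I would pass to the normalization $\nu\colon\widetilde X\to X$, which is finite with $\widetilde X$ smooth, restricts to a biholomorphism $\widetilde X^*:=\nu^{-1}(X^*)\xrightarrow{\ \iso\ }X^*$, and satisfies $\widehat\cO_X\iso\nu_*\cO_{\widetilde X}$. Solving near $x\in\Sing X$ then becomes solving $\dbar\,\tilde u=\tilde f$ on the smooth curve $\widetilde U=\nu^{-1}(U)$ in a punctured neighborhood $\nu^{-1}(U)\minus\nu^{-1}(x)$, for the possibly degenerate metric pulled back from $X^*$; on the disks making up $\widetilde U$ the equation is always locally solvable by the Cauchy transform, producing a $\tilde u$ that is regular across the punctures. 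The crux is to check that the corresponding $u=\tilde u\circ\nu^{-1}$ lies in the domain of the \emph{minimal} extension, i.e.\ that $\nu^{-1}(x)$ is negligible for the graph norm so that cutting off near the punctures converges; this is exactly the analytic fact underlying the removable-singularity characterization of $\ker\dbar_s$ that is already established. With local exactness in hand, the de Rham--Weil isomorphism applied to the fine resolution yields $H^{0,1}_{s,\loc}(X^*)\iso H^1(X,\widehat\cO_X)$, completing the proof.
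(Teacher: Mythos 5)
Your overall architecture coincides with the paper's: both realize $\widehat\cO_X$ as the kernel of a two-term fine resolution by sheaves of germs lying in the domain of a localized minimal $\dbar$-extension, and then apply the abstract de Rham isomorphism, so that degree zero gives the asserted equality and degree one the isomorphism. The difference lies in how local exactness at the singular points is established, and there your argument has a genuine gap. You solve $\dbar\tilde u=\tilde f$ on the normalization by the Cauchy transform and then assert that $u=\tilde u\circ\nu^{-1}$ lies in the domain of the minimal extension because ``$\nu^{-1}(x)$ is negligible for the graph norm.'' That principle is false: by the paper's own local computations, $H^{0,0}_w(A^\ast)\iso t^{1-s}\cO_{L^2}(\Delta)$ while $\ker\dbar_{s,w}\iso\cO_{L^2}(\Delta)$ (see \eqref{eq:puiseux-dbarw-zz} and Lemma~\ref{lem:sholomorphicB}), so for $s=\mult_0 A\geq 2$ the function $z^{-1/s}$ is in $L^2(A^\ast)$, is $\dbar_w$-closed (hence has finite graph norm), but does \emph{not} belong to $\dom\dbar_{s,w}$. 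Membership in the minimal domain is therefore a genuine restriction on the choice of solution, not a removable-singularity statement. Moreover, the cut-off estimate you invoke as ``already established'' (the $\log|\log|z||$ truncation in the proof of Lemma~\ref{lem:sholomorphicB}) is proved there only for bounded, holomorphic $v$; the Cauchy transform of an $L^2$ datum lies in $W^{1,2}_{\loc}$ but need not be bounded on the disc, so to push your construction through you would need an additional input, e.g.\ the critical Hardy inequality $\int_\Delta |v|^2\,|t|^{-2}(\log|t|)^{-2}\,dV\lesssim \|v\|^2_{W^{1,2}}$, to make the cut-off errors tend to zero for the particular solution you construct.

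The paper circumvents this entirely: local solvability for the operator with boundary condition at the singular point is obtained by $L^2$-duality, namely $H^{0,1}_{s,w}(A^\ast)\iso H^{1,0}_{w,s}(A^\ast)=0$ (Lemmas~\ref{lem:sholomorphicA} and \ref{lem:sholomorphicC}), where the closedness of the relevant ranges is proved by transplanting the operators to weighted $L^2$-spaces on the disc and computing adjoints. If you want to keep your direct construction, supply the Hardy-type estimate (or prove boundedness of the chosen solution); otherwise the duality route is the cleaner way to close the argument. The rest of your proof (identification of the kernel with $\widehat\cO_X$, fineness of the sheaves, de Rham--Weil) matches the paper and is fine.
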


If $X$ is irreducible and compact, then $\dim H^0(X,\widehat\cO_X)=1$, $\dim H^1(X,\widehat\cO_X)=g(X)$.
We prove Theorem \ref{thm:main2} in Section \ref{sec:weakly}.

\medskip
To exemplify the use of $L^2$-theory for the $\dbar$-operator on a singular complex space,
in particular the $L^2$-Riemann-Roch theorem, we give in Section \ref{sec:application} two applications.
There, we use our $L^2$-theory to give alternative proofs of two well-known facts.
First, we show that each compact complex curve can be realized as a ramified covering of $\CC\PP^1$.
Second, we show that a positive holomorphic line bundle over a compact complex curve is ample,
yielding that any compact complex curve is projective.

\medskip
Let us clarify the relation to previous work of others.
In the case of complex curves, \eqref{eq:history} was in essence discovered by Pardon \cite{Pardon89},
and one can deduce parts of Theorem \ref{thm:computedCo} and the second statement of Corollary \ref{thm:RR-s} from Pardon's work by 
some additional arguments on the regularity of the $\dbar$-operator.
The first part of Corollary \ref{thm:RR-s} was discovered by Haskell \cite{Haskell89},
and from that one can deduce the second statement of Theorem \ref{thm:RR-w} by use of $L^2$-Serre duality.
Moreover, Theorem \ref{thm:RR-w} was proved in essence by Br\"uning, Peyerimhoff and Schr\"oder in \cite{BrueningPeyerimhoffSchroeder90} and \cite{Schroeder89}
by computing the indices of the $\dbar_w$- and the $\dbar_s$-operator.

The new point in the present work is that we can put all the partial results mentioned above
in the general framework of a comprehensive $L^2$-theory. From that, we draw also a new understanding of weakly holomorphic functions
(Theorem \ref{thm:main2}) and of the divisor $Z-|Z|$. Moreover, all the previous work has been done only for forms
with values in the trivial bundle (except of \cite{Schroeder89}), whereas we incorporate line bundles.
This is essential for applications as we illustrate by the examples mentioned above.

\bigskip

{\bf Acknowledgment.}
{This research was supported by the Deutsche Forschungsgemeinschaft (DFG, German Research Foundation), grant RU 1474/2 within DFG's Emmy Noether Programme. The authors thank Eduardo S. Zeron for interesting and fruitful discussions
and are grateful to the unknown referee for several suggestions which helped to improve the paper.}

\bpar
\section{Preliminaries}\label{sec:preliminaries}

\mpar
\subsection{Closed extensions of the Cauchy-Riemann operator}\label{ssec:dbar}

Let $X$ be a complex curve and $X^\ast:=\Reg X$ the set of regular points. 
We assume that $X$ is a Hermitian complex space in the sense that $X^\ast$ carries a Hermitian metric $\gamma$
which is locally given as the restriction of the metric of the ambient space when $X$ is embedded holomorphically
into some complex number space.

We denote by $\sD^{p,q}(X^\ast)$ the smooth differential forms of degree $(p,q)$ with compact support in $X^\ast$ (test forms) 
and by $L^{p,q}(X^\ast)$ the set of square-integrable forms with respect to the metric $\gamma$ on $X^\ast$.

Let $\dbar_s:L^{p,q}(X^\ast)\abb L^{p,q+1}(X^\ast)$ be the minimal (strong) closed 
$L^2$-extension of the Cauchy-Riemann operator $\dbar:\sD^{p,q}(X^\ast)\abb\sD^{p,q+1}(X^\ast)$, 
\ie $\dbar_s$ is defined by the closure of the graph of $\dbar$ in $L^{p,q}(X^\ast)\times L^{p,q+1}(X^\ast)$. 
$\dbar_w:L^{p,q}(X^\ast)\abb L^{p,q+1}(X^\ast)$ is the maximal (weak) closed $L^2$-extension of $\dbar$,
\ie $\dbar_w$ is defined in sense of distributions. 
We denote by $H_{w/s}^{p,q}(X^\ast)$ the Dolbeault cohomology with respect to $\dbar_w$ or $\dbar_s$, respectively, 
and by $h_{w/s}^{p,q}(X^\ast)$ the dimension of $H_{w/s}^{p,q}(X^\ast)$.

Let $\thet:\sD^{p,q+1}(X^\ast)\abb\sD^{p,q}(X^\ast)$ be the formal adjoint of $\dbar$
and $\thet_{s/w}:=\dbar_{w/s}^\ast$ the Hilbert-space adjoint of $\dbar_{w/s}$.
This notation makes sense as $\thet_{w/s}$ is in fact the maximal (weak) or minimal (strong), respectively, $L^2$-extension of $\thet$. 
Let $\qhodge:L^{p,q}(X^\ast)\abb L^{1-p,1-q}(X^\ast)$ be the conjugated Hodge-$*$-operator with respect to the metric $\gamma$.
Then we have $\thet_{w/s}=-\qhodge\dbar_{w/s}\qhodge$.

Let $L\abb X$ be a Hermitian holomorphic line bundle on $X$ with an (arbitrary) metric on $L$ which is smooth on the whole of $X$. 
We define $\sD^{p,q}(X^\ast, L):=\sC^\infty_{\cpt}(X^\ast,$ $\Lambda^{p,q}T^\ast X^\ast\tensor L)$ as the smooth $(p,q)$-forms with compact support 
and values in $L$, and $L^{p,q}(X^\ast,L)$ as the Hilbert space of square-integrable forms with values in $L$. We consider the Cauchy-Riemann operator $\dbar:\sD^{p,q}(X^\ast, L)\abb\sD^{p,q+1}(X^\ast, L)$ locally given by $\dbar:\sD^{p,q}(X^\ast)\abb\sD^{p,q+1}(X^\ast)$. Since $\dbar$ commutes with the trivializations of the holomorphic line bundle, $\dbar$ is well defined. We get the weak and strong extensions $\dbar_w$, $\dbar_s:L^{p,q}(X^\ast,L)\abb L^{p,q+1}(X^\ast,L)$ and the cohomology $H_{w/s}^{p,q}(X^\ast, L)$ as above.

\medskip
In Section \ref{sec:local}, we will study also the following other closed extensions of $\dbar$ besides the minimal $\dbar_s$ and the maximal $\dbar_w$.
Let $\Omega\Subset\CC^n$ be a domain,
and $X\aus \Omega$ an analytic set of dimension one with $\Sing X=\{0\}$. 
We can interpret $\dbar_s$ as $\dbar_w$ with certain boundary conditions. 
The boundary of $X^\ast$ consists of two parts,
the singular point $\{0\}$ and the boundary at $\partial \Omega$: $\partial X=\partial X^\ast\minus \{0\}$. 
Therefore, there are two boundary conditions. 
Let $\dbar_{s,w}$ denote the closed $L^2$-extension which satisfies the boundary condition at $\{0\}$, 
\ie $f\in\dom\dbar_{s,w}$ iff $f\in\dom\dbar_{w}$ and there is a sequence $\{f_j\}$ in $\dom\dbar_{w}$ such that $\supp f_j\durch\{0\} =\leer$, $f_j\abb f$, and $\dbar_w f_j\abb \dbar_w f$ in $L^2$. $\dbar_{w,s}$ denotes the extension which satisfies the boundary condition at $\partial X$, \ie $f\in\dom\dbar_{w,s}$ iff $f\in\dom\dbar_{w}$ and there is a sequence $\{f_j\}$ in $\dom\dbar_{w}$ such that $\supp f_j\durch \partial X =\leer$, $f_j\abb f$, and $\dbar_w f_j\abb \dbar_w f$ in $L^2$. 
We define the adjoint operators 
$$\thet_{s,w}:=-\qhodge\dbar_{s,w}\qhodge\ \ \mbox{ and }\ \ \thet_{w,s}:=-\qhodge\dbar_{w,s}\qhodge,$$
which we can realize as Hilbert-space adjoint operators (see \cite[Lem.~5.1]{Ruppenthal14}):
 
\begin{lem}\label{lem:comp-adjoint} 
The Hilbert-space adjoints $\dbar_{s,w}^*$ and $\dbar_{w,s}^*$
satisfy the representations $\dbar_{s,w}^\ast=\thet_{w,s}=-\qhodge\dbar_{w,s}\qhodge$
and $\dbar_{w,s}^\ast=\thet_{s,w}=-\qhodge\dbar_{s,w}\qhodge$, respectively.
\end{lem}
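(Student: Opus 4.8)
The plan is to prove the first representation $\dbar_{s,w}^\ast=\thet_{w,s}=-\qhodge\dbar_{w,s}\qhodge$ in full; the second one then follows by the same argument with the roles of the two boundary parts $\{0\}$ and $\partial X$ interchanged (equivalently, by taking the Hilbert-space adjoint of the first identity and using that $\dbar_{s,w}$ is closed and $\qhodge$ is a conjugate-linear isometric involution). Throughout I use the pointwise identity $\thet=-\qhodge\dbar\qhodge$ on $X^\ast$ together with the integration-by-parts formula $\langle\dbar\varphi,\psi\rangle=\langle\varphi,\thet\psi\rangle$, which holds without boundary contribution as soon as the relevant integrand has compact support in $X^\ast$.

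First I would check the inclusion $\thet_{w,s}\subseteq\dbar_{s,w}^\ast$. Let $v\in\dom\thet_{w,s}$ and write $v=\qhodge w$ with $w\in\dom\dbar_{w,s}$; choose $w_k\to w$, $\dbar_w w_k\to\dbar_w w$ with $\supp w_k\cap\{0\}\neq\{0\}$ excluded only at $\partial X$, \ie $\supp w_k\cap\partial X=\varnothing$, so that $\qhodge w_k\to v$. For $f\in\dom\dbar_{s,w}$ pick $f_j\to f$, $\dbar_w f_j\to\dbar_w f$ with $\supp f_j\cap\{0\}=\varnothing$. The integrand of $\langle\dbar f_j,\qhodge w_k\rangle$ is supported in $\supp f_j\cap\supp w_k$, which is bounded away from $\{0\}$ (through $f_j$) and from $\partial X$ (through $w_k$) and hence compact in $X^\ast$; integration by parts therefore produces no boundary term and, with $\thet=-\qhodge\dbar\qhodge$, gives $\langle\dbar f_j,\qhodge w_k\rangle=\langle f_j,\thet(\qhodge w_k)\rangle$. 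Letting first $k\to\infty$ and then $j\to\infty$ yields $\langle\dbar_{s,w}f,v\rangle=\langle f,\thet_{w,s}v\rangle$, so $v\in\dom\dbar_{s,w}^\ast$ with $\dbar_{s,w}^\ast v=\thet_{w,s}v$.

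For the reverse inclusion $\dbar_{s,w}^\ast\subseteq\thet_{w,s}$ I would localize at the two boundary parts. From $\dbar_s\subseteq\dbar_{s,w}\subseteq\dbar_w$ and $\thet_w=\dbar_s^\ast$, $\thet_s=\dbar_w^\ast$ one obtains the sandwich $\thet_s\subseteq\dbar_{s,w}^\ast\subseteq\thet_w$; in particular every $v\in\dom\dbar_{s,w}^\ast$ satisfies $\qhodge v\in\dom\dbar_w$ and $\dbar_{s,w}^\ast v=-\qhodge\dbar_w\qhodge v$. It thus remains only to show that $\qhodge v$ satisfies the strong boundary condition at $\partial X$, \ie $\qhodge v\in\dom\dbar_{w,s}$: the weak condition at $\{0\}$ built into $\dbar_{w,s}$ imposes nothing beyond $\qhodge v\in\dom\dbar_w$, and once $\qhodge v\in\dom\dbar_{w,s}$ is known one has $\dbar_{w,s}\qhodge v=\dbar_w\qhodge v$, whence $\dbar_{s,w}^\ast v=\thet_{w,s}v$ automatically. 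So $\{0\}$ plays no role, and near the smooth part $\partial X$ the space $X^\ast\cup\partial X$ is an ordinary Hermitian manifold with boundary, where the claim is the classical duality that the adjoint of the maximal (weak) $\dbar$ at a smooth boundary is the minimal (strong) $\thet$. Concretely, since $\dbar_{s,w}$ imposes no condition at $\partial X$, its domain contains forms whose support reaches $\partial X$; testing the adjoint identity $\langle\dbar_{s,w}f,v\rangle=\langle f,\dbar_{s,w}^\ast v\rangle$ against such $f$ and integrating by parts isolates a boundary integral over $\partial X$ that must vanish for all admissible $f$, forcing the boundary trace of $\qhodge v$ along $\partial X$ to vanish. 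An inward shift followed by Friedrichs mollification, localized near $\partial X$ by a cut-off (and hence not disturbing the behaviour at $\{0\}$), then approximates $\qhodge v$ in the graph norm by forms supported away from $\partial X$, which gives $\qhodge v\in\dom\dbar_{w,s}$.

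The second representation $\dbar_{w,s}^\ast=\thet_{s,w}=-\qhodge\dbar_{s,w}\qhodge$ follows by repeating this scheme with $\{0\}$ and $\partial X$ exchanged; now the non-trivial local duality sits at the singular point $\{0\}$ and is exactly the content imported from \cite[Lem.~5.1]{Ruppenthal10}, while $\partial X$ imposes nothing. I expect the real obstacle to be the reverse inclusion near a boundary part (the $\partial X$-step above, and its singular-point analogue): making the vanishing-trace argument precise and producing an approximating sequence supported away from one boundary component without disturbing the analysis at the other. The localizing cut-off is what decouples the two boundary contributions, and the singular-point half is precisely what \cite[Lem.~5.1]{Ruppenthal10} provides.
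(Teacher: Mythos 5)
The paper does not actually prove this lemma: it imports it verbatim from \cite[Lem.~5.1]{Ruppenthal10}, so there is no in-paper argument to compare against. Your reconstruction is structurally sound and is, in substance, the argument of that reference. The pieces that are fully correct as written are: the inclusion $\thet_{w,s}\subseteq\dbar_{s,w}^\ast$ by pairing approximating sequences whose supports avoid \emph{complementary} boundary components, so that the relevant integrands are compactly supported in $X^\ast$ (note only that $f_j$ and $w_k$ lie merely in $\dom\dbar_w$, so the no-boundary-term integration by parts itself requires an interior Friedrichs mollification -- standard, but worth saying); the sandwich $\thet_s=\dbar_w^\ast\subseteq\dbar_{s,w}^\ast\subseteq\dbar_s^\ast=\thet_w$, which correctly reduces the reverse inclusion to the single domain statement $\qhodge v\in\dom\dbar_{w,s}$ and shows the operator formula is then automatic; the cut-off that decouples the two boundary parts; and the observation that the second identity is the Hilbert-space adjoint of the first (this is indeed the cleaner route, since it avoids redoing any work at the singular point, where the smooth-boundary machinery is not available).

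The one step that remains genuinely a sketch is the $\partial X$-half of the reverse inclusion, and two points there need care. First, for $\dbar$ (as opposed to the full gradient) the graph norm does not control a boundary trace, so ``the trace of $\qhodge v$ along $\partial X$ vanishes'' cannot be taken literally; the correct formulation is the generalized Cauchy condition -- the extension of $\qhodge v$ by zero across $\partial X$ still satisfies the $\dbar$-equation distributionally -- which is exactly the device the paper itself uses in the proof of \prettyref{lem:sholomorphicA}, and it is this reformulation that makes the inward shift plus mollification converge in graph norm. Second, the inward-shift argument presupposes that $\partial X=\partial X^\ast\minus\{0\}$ is a reasonably regular boundary of the Riemann surface $X^\ast$ (true after shrinking $D$ to a generic ball so that it meets the curve transversally, but not stated in the setup here or in your argument). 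With those two points made explicit your proof closes; they are precisely the technical content delegated to \cite[Lem.~5.1]{Ruppenthal10}.
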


\mpar
\subsection{Resolution of complex curves, divisors, line bundles}\label{sec:resolution}

Every (reduced) complex space $X$ (which is countable at infinity) has a resolution of singularities $\pi:M\abb X$, \ie there are a complex manifold $M$,
a proper complex subspace $S$ of $X$ which contains the singular locus of $X$ and a proper holomorphic map $\pi:M\abb X$ such that the restriction $M\minus\pi^{-1}(S)\abb X\minus S$ of $\pi$ is biholomorphic,
and $\pi^{-1}(S)$ is the locally finite union of smooth hypersurfaces (see \cite{Hironaka64} and \cite[Thm.~7.1]{Hironaka77}).

\par
If $X$ is a compact complex curve, then such a resolution is given just by the normalization of the curve,
and it is unique up to biholomorphism: 
Let $\pi_1:M_1\abb X$ and $\pi_2:M_2\abb X$ be two resolutions of $X$. 
Then $\psi:=\pi_2^{-1}\nach\pi_1: M_1\minus\pi_1^{-1}(\Sing X)\abb M_2\minus\pi_2^{-1}(\Sing X)$ is biholomorphic and bounded in the singular locus. 
Yet, $\pi_i^{-1}(\Sing X)$ consist of isolated points. Therefore, $\psi$ has a (bi-) holomorphic extension.

\mpar
Let $\pi:M\abb X$ be a resolution of a compact complex curve $X$. 
We define the \newterm{genus} of $X$ by the genus of the resolution
	\[g(X):=h^1(M)=\dim H^1(M,\cO).\]
If $X$ has more than one irreducible component,
then $M$ is not connected and $h^1(M)$ is the sum of the genera of the connected components.
Since the resolution is unique up to biholomorphism, this is well\nbd defined.

\mpar
Throughout the article (except of \prettyref{sec:cover}), we will work with divisors on compact Riemann surfaces only. 
Therefore, there is no difference between Cartier and Weil divisors, and we can associate to each line bundle a divisor.

\mpar
Let $L\abb X$ be a holomorphic line bundle. 
Then the pull-back $\pi^\ast L\rightarrow M$ is well-defined by the pull-back of the transition functions of the line bundle.
There is a divisor $D$ on $M$ associated to $\pi^\ast L$ such that $\cO(\pi^\ast L)\iso \cO(D)$\footnote{
We denote by $\cO(D)$ the sheaf of germs of holomorphic functions $f$ such that $\mbox{div}(f)+D\geq0$.} and $\deg \pi^\ast L=\deg D$. 
The uniqueness of the resolution (up to biholomorphism) implies the independence of $\deg \pi^\ast L$ from $\pi$, so that
	\[\deg L:=\deg \pi^\ast L\]
is also well\nbd defined.

\mpar
For any divisor $D$ on $M$, there exists a holomorphic line bundle $L_D \rightarrow M$ associated to
$D$ such that $\cO(L_D)\cong \cO(D)$. 
The constant function $f= 1$ induces a meromorphic section $s_D$ of $L_D$
such that $\mbox{div}(s_D)=D$. One can then identify sections in $\cO(D)$
with sections in $\cO(L_D)$ by $g\mapsto g\otimes s_D$,
and we denote the inverse mapping by $s\mapsto s\cdot s_D^{-1}$.
If $Y$ is an effective divisor, then $s_Y$ is a holomorphic section of $L_Y$ and $\cO \subset \cO(Y)$. Hence, there is the natural inclusion $\cO(D)\subset \cO(D+Y)$
which induces the inclusion $\cO(L_{D})\subset \cO(L_{D+Y})$ given by
$s\mapsto (s\cdot s_{D}^{-1})\otimes s_{D+Y}$.
For $U\aus M$, we obtain the inclusion
\begin{eqnarray}\label{eq:L2LocInclusion}
L^{p,q}_{\loc}(U,L_D) \hookrightarrow L^{p,q}_{\loc}(U,L_{D+Y}), s\mapsto (s\cdot s_{D}^{-1})\otimes s_{D+Y}.
\end{eqnarray}
Here, $L^{p,q}_{\loc}(U,L_D)$ denotes the locally square-integrable forms with values in $L_D$. This definition is independent of the chosen Hermitian metric on $L_D$.
If $M$ is compact, all metrics are equivalent and we get the inclusion
\begin{equation}\label{eq:L2inclusion}
L^{p,q}(M, L_D) \subset L^{p,q}(M, L_{D+Y}).
\end{equation}

\mpar
Let $Z:=\pi^{-1}(\Sing X)$ be the unreduced exceptional divisor and $|Z|$ the underlying reduced divisor.
Then $\deg(Z-|Z|)$ is independent of the resolution as well. 
We will discuss some alternative ways to compute $\deg Z$.

\mpar
Locally, the resolution is given by the Puiseux parametrization:
Let $A$ be an analytic set of dimension 1 in $\Omega\Subset\CC^n$ with $\Sing A=\{0\}$ which is irreducible at $0$. 
Shrinking $\Omega$, there are coordinates $z,w_1,...,w_{n-1}$ around $0$ 
such that $A$ is contained in the cone $\|w\|\kl C|z|$, $w=(w_1,..\,,w_{n-1})$. 
The projection $\pr_z:A\abb\CC_z$ on the $z$-coordinate is a finite ramified covering. Let $s$ be the number of the sheets of $\pr_z$.
Generic choice of the coordinates gives the same number of sheets $s$, called the multiplicity $\mult_0 A$ of $A$ in $\{0\}$. 
There exists a parametrization $\pi:\Delta\abb A, t\auf (t^s,w_1(t),...,w_{n-1}(t))$, where $\Delta:=\{t\in\CC:|t|<1\}$; 
\cf \eg \cite[Sect.~6.1]{Chirka89}. $\pi$ is called the Puiseux parametrization.
The unreduced exceptional divisor is just $Z=(\pi^{-1}(z))=(t^s)$, and so $\deg Z=s$.
\par
The number of sheets of the covering $\pr_z$ is also equal to the Lelong number $\nu([A],0)$ of the positive closed current $[A]$ given by the integration over $A$ (see \cite[Prop.~2 in \S~3.15]{Chirka89}, \cite[Thm.~7.7]{DemaillyAG} or \cite[\S~3.2]{GriffithsHarris78}).

\mpar
The tangent cone gives another way to compute $\mult_0A$. For a holomorphic function $f$ on $\Omega$, let $f=\sum_{k=k_0}^\infty f_{k}$ be the decomposition in homogeneous polynomials $f_{k}$ of degree $k$ with $f_{k_0}\ungl 0$ (choosing a smaller $\Omega$) and $f^\ast:=f_{k_0}\ungl 0$ be the \newterm{initial homogeneous polynomial of $f$}.
If $A$ is given by the ideal sheaf $\sJ\!_A$, then
	\[C_0(A)=\left\{\alpha\in\CC^n: f^{\ast}(\alpha)=0\ \forall f\in \sJ\!_{A,0}\right\}\aus T_0\CC^n\]
is called the \newterm{tangent cone} of $A$ in $0$ (\cf \cite[Sect.~8.4]{Chirka89}).
The natural projection $\CC^n\minus 0\abb\CP^{n-1}$ maps $C_0(A)$ on a projective variety $\widetilde{C_0}(A)$. 
The degree $\deg Y$ of a projective variety $Y$ in $\CP^{n-1}$ of dimension $p$ is defined as the class of $Y$ in $H_{2p}(\CP^{n-1},\ZZ)\iso \ZZ$,
and $\mult_0 A=\deg \widetilde{C_0}(A)$ (see Sect.~2 of \cite[\S\,1.3]{GriffithsHarris78}). In the case of an irreducible complex curve $A$, 
note that $\widetilde{C_0}(A)$ is just a point of multiplicity $\mult_0 A$.
\mpar
All in all, we have
	\[\deg Z=\mult_0 A=\nu([A],0)=\deg \widetilde{C_0}(A).\]

\mpar
\subsection{Extension theorems}

We need the following extension theorem. Let $\Delta$ be the unit disc in $\CC$ and $\Delta^*:=\Delta\minus\{0\}$.

\begin{thm}[$L^2$-extension]\label{thm:extension} If $u\in L_\loc^{p,0}(\Delta)$ and $v\in L_\loc^{p,1}(\Delta)$ satisfy $\dbar u=v$ on $\Delta^\ast$ in the sense of distributions, then $\dbar u=v$ on $\Delta$.\end{thm}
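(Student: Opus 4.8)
The statement is a removable-singularity assertion for the distributional $\dbar$-equation: the identity $\dbar u=v$ is known on the punctured disc $\Delta^\ast$, and I must show that no distributional mass is created at the puncture $0$. The plan is to pass to the weak formulation and test against smooth compactly supported forms. Unravelling definitions, $\dbar u=v$ on $\Delta$ means that
$$\int_\Delta v\wedge\psi=\pm\int_\Delta u\wedge\dbar\psi$$
for every test form $\psi\in\sD^{1-p,0}(\Delta)$, where the sign depends only on $p$; by hypothesis this already holds whenever $\supp\psi\durch\{0\}=\leer$. So the only thing left is to verify the identity for test forms $\psi$ whose support contains $0$, and the whole difficulty is concentrated at that single point.

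The main device is a family of cut-off functions. Fixing such a $\psi$, I choose $\chi_\eps\in\sC^\infty_{\cpt}(\Delta)$ with $\chi_\eps\equiv0$ near $0$, $\chi_\eps\equiv1$ outside a small neighborhood of $0$, and $0\kl\chi_\eps\kl1$. Then $\chi_\eps\psi$ has support in $\Delta^\ast$, so the identity above applies to it. Expanding $\dbar(\chi_\eps\psi)=\dbar\chi_\eps\wedge\psi+\chi_\eps\,\dbar\psi$ splits the right-hand side into a main term $\pm\int_\Delta u\wedge\chi_\eps\,\dbar\psi$ and an error term $\pm\int_\Delta u\wedge\dbar\chi_\eps\wedge\psi$. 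Letting $\eps\abb0$ so that $\chi_\eps\abb1$ pointwise a.e., and using that $u,v\in L^1_\loc(\Delta)$ while $\psi,\dbar\psi$ are bounded with compact support, dominated convergence yields $\int_\Delta v\wedge\chi_\eps\psi\abb\int_\Delta v\wedge\psi$ and $\int_\Delta u\wedge\chi_\eps\,\dbar\psi\abb\int_\Delta u\wedge\dbar\psi$. Thus the theorem reduces to showing that the error term tends to $0$.

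This last point is the heart of the matter and is exactly where the $L^2$-hypothesis enters: a point has vanishing $W^{1,2}$-capacity in real dimension two, the borderline case realized by the logarithmic potential. Concretely I would take the logarithmic cut-off $\chi_\eps(z)=\rho\big(\log|z|/\log\eps\big)$ for a fixed profile $\rho$ with $\rho(t)=1$ for $t\kl\tfrac12$ and $\rho(t)=0$ for $t\gr1$, so that $\dbar\chi_\eps$ is supported in the annulus $\{\eps\kl|z|\kl\eps^{1/2}\}$ and an elementary computation gives $\|\dbar\chi_\eps\|_{L^2(\Delta)}^2=O\big(1/\log(1/\eps)\big)\abb0$. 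Since $\psi$ is bounded and $u\in L^2_\loc(\Delta)$, the Cauchy--Schwarz inequality gives
$$\Big|\int_\Delta u\wedge\dbar\chi_\eps\wedge\psi\Big|\kl C\,\|u\|_{L^2(\supp\psi)}\,\|\dbar\chi_\eps\|_{L^2(\Delta)}\abb0,$$
which finishes both cases $p=0$ and $p=1$ simultaneously, as the computation is identical and only the bidegree of $\psi$ changes. The one genuine obstacle is therefore the construction of a cut-off whose differential is small in $L^2$: the naive radial cut-off, whose gradient has $L^2$-norm bounded below, does not suffice, and it is precisely the logarithmic scaling that forces the error to $0$ and explains why the hypothesis is $u\in L^2_\loc$ rather than merely $L^1_\loc$.
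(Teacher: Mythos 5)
Your argument is correct and is the standard capacity/cut-off proof of this removable-singularity statement; the paper itself gives no proof but only cites \cite[Thm.~3.2]{Ruppenthal09IntJMath}, whose proof rests on exactly the same device of test forms multiplied by cut-offs whose $\dbar$ has small $L^2$-norm, so you are on the paper's intended route. One small quibble: your closing remark that the naive radial cut-off ``does not suffice'' is an overstatement --- its $\dbar$ has $L^2$-norm merely bounded, but since its support is an annulus shrinking to $\{0\}$ one still gets $\|u\|_{L^2(\supp\dbar\chi_\eps)}\abb 0$ by absolute continuity of the integral, so that cut-off would also close the argument; the hypothesis $u\in L^2_\loc$ (rather than $L^1_\loc$, where $u=1/z$ is a counterexample) is what is genuinely essential, not the logarithmic scaling.
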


A more general statement is true for domains in $\CC^n$ and proper analytic subsets of arbitrary codimension, \cf \eg \cite[Thm.~3.2]{Ruppenthal09IntJMath}.

\mpar
If $A\aus \Omega\Subset\CC^n$ is a pure dimensional analytic set, let $\wcO=\wcO_A$ be the normalization sheaf of $\cO_A$ which is defined stalk-wise by the integral closure of $\cO_{A,x}$ in the sheaf $\cM_{A,x}$ of meromorphic functions for all $x\in A$. A function in $\wcO(U)$, $U\aus A$ open, is called \newterm{weakly holomorphic}. Weakly holomorphic functions are holomorphic in regular points of $A$ and bounded in singular points. If $A$ is locally irreducible, then weakly holomorphic functions are continuous in $\Sing A$ (cf. \eg \cite[\S~VI.4]{GrauertRemmertCAS}.)
\par
The classical Riemann extension theorem generalizes to the following result (see \eg \cite[Sect.~VII.4.1]{GrauertRemmertCAS}).

\begin{thm}[Riemann extension]\label{thm:weak-Riemann-extension} Let $A\aus \Omega\Subset\CC^n$ be a pure dimensional analytic set. Every holomorphic function on $A^\ast:=\Reg A$ which is bounded at $\Sing A$ is weakly holomorphic on $A$. 
\end{thm}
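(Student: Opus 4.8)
The plan is to reduce the statement to the second Riemann extension (removable‑singularity) theorem on the normalization of $A$. Let $\nu\colon\widetilde A\abb A$ denote the normalization. By the finiteness of normalization (Oka), $\nu$ is a finite holomorphic map, $\widetilde A$ is a \emph{normal} complex space of the same pure dimension as $A$, the restriction $\widetilde A\minus\nu^{-1}(\Sing A)\abb\Reg A$ is biholomorphic, and the pushforward $\nu_*\cO_{\widetilde A}$ is precisely the normalization sheaf $\wcO_A$, i.e.\ the integral closure of $\cO_A$ in $\cM_A$. Hence it suffices to show that, given $f$ holomorphic on $\Reg A$ and locally bounded at $\Sing A$, the pullback $\widetilde f:=f\nach\nu$ extends to a holomorphic function on all of $\widetilde A$.

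Since $A$ is reduced and pure‑dimensional, $\Sing A$ is a proper analytic subset, hence nowhere dense; because $\nu$ is finite it does not raise dimension, so $S:=\nu^{-1}(\Sing A)$ is a nowhere‑dense (thin) analytic subset of $\widetilde A$. On $\widetilde A\minus S$ the function $\widetilde f$ is holomorphic, and by the biholomorphism above together with the hypothesis on $f$ it is locally bounded near $S$.

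The heart of the argument is then the second Riemann extension theorem on the normal space $\widetilde A$: a locally bounded holomorphic function on the complement of a thin analytic set extends holomorphically. I would either invoke this directly from \cite[Sect.~VII.4.1]{GrauertRemmertCAS}, or argue locally as follows. Around a point $y\in S$ choose, by the local parametrization theorem, a finite branched projection $\tau$ of a neighborhood of $y$ onto an open set in $\CC^k$, $k=\dim A$. The elementary symmetric functions of the values of $\widetilde f$ along the fibres of $\tau$ are bounded and holomorphic off a hypersurface, hence extend across it by the classical Riemann extension theorem in $\CC^k$. Consequently $\widetilde f$ satisfies a monic polynomial equation with holomorphic coefficients, so it is integral over $\cO_{\widetilde A,y}$; as $\widetilde A$ is normal, $\cO_{\widetilde A,y}$ is integrally closed in $\cM_{\widetilde A,y}$, and therefore $\widetilde f\in\cO_{\widetilde A,y}$. (In the curve case $\widetilde A$ is even smooth, and this step is just the classical removable‑singularity theorem on a Riemann surface.)

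Finally, the holomorphic extension of $\widetilde f$ is, via $\nu_*\cO_{\widetilde A}=\wcO_A$, a weakly holomorphic function on $A$ whose restriction to $\Reg A$ equals $f$ by the identity theorem on the dense open set $\Reg A$. Equivalently, the monic integral equation for $\widetilde f$ descends, through the biholomorphism over $\Reg A$, to a monic equation for $f$ with coefficients in $\cO_{A,x}$ at each $x\in\Sing A$, exhibiting $f$ as an element of the integral closure $\wcO_{A,x}$. I expect the main obstacle to be the extension step on the normal space $\widetilde A$; the remaining steps are bookkeeping around the normalization.
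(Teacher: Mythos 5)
The paper does not prove this statement at all; it quotes it as the classical generalization of the Riemann extension theorem, citing \cite[Sect.~VII.4.1]{GrauertRemmertCAS}, so there is no in-paper proof to compare against. Your argument is essentially the standard proof from that reference and is correct: the self-contained core is the local symmetric-function/integrality step (finite branched projection onto $\CC^k$, extend the elementary symmetric functions of $f$ over the branch locus by the classical Riemann extension theorem, conclude a monic equation exhibiting $f$ as integral over $\cO_{A,x}$), which directly matches the paper's definition of $\wcO_{A,x}$ as the integral closure of $\cO_{A,x}$ in $\cM_{A,x}$. One caution about your preferred route through the normalization: the identification $\nu_\ast\cO_{\widetilde A}=\wcO_A$ is itself usually established by means of exactly this theorem, so invoking it as input is latently circular --- the local argument you give as the alternative is the one that should carry the proof, together with the universal-denominator observation needed to see that $f$ defines a germ in $\cM_{A,x}$ in the first place.
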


\bpar
\section{Local \texorpdfstring{$L^2$}{L\textasciicircum 2}-theory of complex curves}\label{sec:local}

In this section, we study the local $L^2$-theory of (locally) irreducible analytic curves in $\CC^{n}$.
By the remarks on the local structure of singular complex curves in \prettyref{sec:resolution} and \prettyref{sec:main}, 
it follows that the studied situation is general enough.
We will compute the $L^2$-Dolbeault cohomology by use of the Puiseux parametrization 
and will see why the term $\sum_{x\in\Sing X} \mult_x' X$ occurs in \eqref{eq:RR-w1}.
\mpar

Let $A$ be an irreducible analytic curve in $\Delta^n\aus\CC^{n}_{zw_1...w_{n-1}}$ given by the Puiseux parametri\-zation
$$\pi\colon\Delta \abb \CC^{n}\ ,\ \pi(t):=(t^s,  w(t)),$$
where $w=(w_1,...,w_{n-1})\colon\Delta\abb \Delta^{n-1}$
is a holomorphic map
such that each component $w_i$ vanishes at least of the order $s+1$ in the origin. Here, $\Delta$ is the unit disk $\{t\in\CC:|t|<1\}$.
We can assume that $\pi$ is bijective, in particular, a resolution/normalization of $A$ such that $\mult_0 A= s$.
Further, we can assume that $0$ is the only singular point of $A$.

For a regular point $(z_0,w_0)\in A^\ast:=\Reg A$, let $t_0\in\Delta^\ast$ be the preimage under $\pi$. Since $\pi$ is biholomorphic on $\Delta^\ast:=\Delta\minus \{0\}$,
$d\pi_{t_0}(\frac{\partial}{\partial t})=st_0^{s-1} \frac{\partial}{\partial z}+\sum_{k=1}^{n-1} w_k'(t_0)\frac{\partial}{\partial w_k}$
is a non-vanishing tangent vector of $A^\ast$ in $(z_0,w_0)$,  \ie 
	$$(1+\|\hbox{$\frac 1s$} t_0^{1-s} {w'(t_0)}\|^2)^{-\spfrac 12}\left(\frac{\partial}{\partial z}+\sum_{k=1}^{n-1} \frac 1s t_0^{1-s}w_k'(t_0)\frac{\partial}{\partial w_k}\right)$$
is a normalized generator of $T_{(z_0,w_0)} A^*$ and $(1+\|\frac 1s t_0^{1-s} {w'(t_0)}\|^2)^{\spfrac 12} dz$ is a normalized generator of $T_{(z_0,w_0)}^\ast A^\ast$.
Since $w'_k$ vanishes at least of order $s$ in the origin, we obtain $1+\|\frac 1s t^{1-s} {w'(t)}\|^2 \sim 1$ on $\Delta$ and $dV_{A^\ast} \sim \ii dz\wedge d\quer z$, where $dV_{A^\ast}$ denotes the volume form on ${A^\ast}$ induced by the standard Euclidean metric of $\CC^n$.
Using $\pi^* dz = d(\pi^\ast z)= dt^s= s t^{s-1} dt$ and $\pi^*(dz\wedge d\overline{z}) = s^2 |t|^{2(s-1)} dt\wedge d\overline{t}$, we get
	\[\pi^\ast dV_{A^\ast}\sim |t|^{2(s-1)} dV_\Delta.\]
Let $\iota:{A^\ast}\abb\Delta^\ast$ be the inverse of $\pi$. 
Then, $\iota(z,w)$ is the root $t=\sqrt[s]{z}$ with $w=w(t)$. 
We get $\iota^\ast(dt)=\frac 1s z^{\spfrac 1s-1} dz$ and $\iota^\ast(dt\wedge d\quer t)=\frac 1{s^2} |z|^{2(\spfrac 1s-1)} dz\wedge d\quer z$, \ie
	\[\iota^\ast dV_{\Delta^\ast}\sim |z|^{2(\spfrac 1s-1)} dV_{A^\ast}.\]
If $g$ is a measurable function on $A^\ast$, we obtain
	\[\int_{A^\ast} |g|^2 dV_{A^\ast} =\int_\Delta |\pi^\ast g|^2\pi^\ast dV_{A^\ast}\sim\int_\Delta|\pi^\ast g|^2\cdot|t|^{2(s-1)} dV_\Delta.\]
Hence,
	\[g\in L^{0,0}({A^\ast})\genau t^{s-1}\pi^\ast g\in L^{0,0}(\Delta).\]
For $(0,1)$-forms and $(1,1)$-forms, we have
	\[\pi^\ast(g d\quer z)=\pi^\ast g\cdot\pi^\ast(d\quer z)=\quer t^{s-1}\pi^\ast (g) d\quer t,\]
	\[\pi^\ast(g dz\wedge d\quer z)=|t|^{2(s-1)}\pi^\ast(g) dt\wedge d\quer t.\]
Thus
	\begin{equation}\label{eq:easypullback}\begin{split}
	f\in L^{0,0}(A^\ast) &\genau t^{s-1}\cdot\,\pi^\ast f\in L^{0,0}(\Delta),\\
	f\in L^{1,0}(A^\ast) &\genau \pi^\ast f\in L^{1,0}(\Delta),\\
	f\in L^{0,1}(A^\ast) &\genau \pi^\ast f\in L^{0,1}(\Delta), \hbox{ and }\\
	f\in L^{1,1}(A^\ast) &\genau t^{1-s}\cdot\,\pi^\ast f\in L^{1,1}(\Delta).
	\end{split}\end{equation}
On the other hand, if $v\in L^{0,0}(\Delta)$, we get
	\[\infty>\int_\Delta |v|^2 dV_\Delta=\int_{A^\ast} |\iota^\ast v|^2\iota^\ast dV_\Delta\sim\int_{A^\ast}|\iota^\ast v|^2\cdot|z|^{2(\spfrac 1s-1)} dV_{A^\ast}.\]
Thus, $|z|^{\spfrac 1s-1}\iota^\ast v$ is square-integrable on $A^*$. For each $(0,1)$-form $v d\quer t\in L^{0,1}(\Delta)$, 
we get $s\iota^\ast(v d\quer t)=\quer z{}^{\spfrac 1s-1}\iota^\ast (v) d\quer z\in L^{0,1}({A^\ast})$, 
and for each $(1,1)$-form $v dt\wedge d\quer t\in L^{1,1}(\Delta)$, we get $|z|^{1-\frac 1s}\iota^\ast(v dt\wedge d\quer t) \in L^{1,1}({A^\ast})$.

\mpar
So, if $f\in L^{0,1}({A^\ast})$, then $u:=\pi^\ast f$ is in $L^2$, too. 
Since $\dim \Delta=1$, there exists $v\in L^{0,0}(\Delta)$ with $\dbar_w v=u$. 
We set $g:=\iota^\ast v$. Since $|z|^{\spfrac 1s-1} g$ is in $L^2$ and $|z|^{2(1-\spfrac 1s)}$ is bounded,
	\[\|g\|^2_{L^2}=\int_{A^\ast}|z^{\spfrac 1s-1} g|^2 \cdot |z|^{2(1-\spfrac 1s)}dV_{A^\ast}\kl\|z^{\spfrac 1s-1} g\|_{L^2}\cdot\| z^{2(1-\spfrac 1s)}\|_{L^\infty}<\infty.\]
Hence, we get an $L^2$-solution for $\dbar_w g=f$ and
	\[H^{0,1}_{w}({A^\ast})= L^{0,1}({A^\ast}) / \cR(\dbar_w)=0.\]

\medskip
In the same way, it is easy to compute 
	\[H^{1,1}_{w}({A^\ast})=0.\]

\medskip
We will now determine $H^{p,0}_{w}({A^\ast})=\ker (\dbar_w: L^{p,0}\abb L^{p,1})$ by use of the $L^2$\nbd extension theorem (\prettyref{thm:extension}). For this, let $\cOL$ be the square-integrable holomorphic functions on $\Delta$, and let $\cOmL$ be the  holomorphic $1$-forms with square-integrable coefficient.
If $g\in L^{0,0}({A^\ast})$ and $\dbar_w g=0$, 
then $v:=\pi^\ast g\in|t|^{1-s}L^{0,0}(\Delta)$ and $\dbar_w v=0$ on $\Delta^\ast$. 
Therefore, $\dbar (t^{s-1}v)=0$ on $\Delta^\ast$ and $t^{s-1}v\in L^{0,0}(\Delta)$. 
The extension theorem implies $\dbar (t^{s-1} v)=0$ on $\Delta$, \ie $v$ is a meromorphic function with a pole of order $s-1$ or less at the origin. 
We say $v\in\cOLm{1-s}$. Since, on the other hand, $\iota^\ast(\cOLm{1-s})\aus\ker\dbar_w$, we conclude
	\begin{equation}\label{eq:puiseux-dbarw-zz}H^{0,0}_{w}({A^\ast})\iso\cOLm{1-s}.\end{equation}
If $f\in L^{1,0}({A^\ast})$ and $\dbar_w f=0$, then $u:=\pi^\ast f\in L^{1,0}(\Delta)$ and $\dbar_w u=0$ on $\Delta$ (using the extension theorem again). 
Hence, $u$ is holomorphic on $\Delta$ and
	\[H^{1,0}_{w}({A^\ast})\iso\cOmL.\]

To compute the cohomology groups $H^{\ast,\ast}_{s}({A^\ast})$, we use $L^2$\nbd duality:

\begin{lem}\label{lem:duality} 
Let $\dbar_e$ denote either the weak or the strong closed extension of $\dbar$, and $\dbar_{e^c}$ the other one.
For $p\in\{0,1\}$, let the range $\cR(\dbar_e)$ of $\dbar_e: L^{p,0}\abb L^{p,1}$ be closed. Then
	\[H^{p,1}_{e}({A^\ast})\iso H^{1-p,0}_{e^c}({A^\ast}).\]
\end{lem}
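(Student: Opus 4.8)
The plan is to reduce the statement to abstract Hilbert-space duality combined with the conjugate Hodge star, exploiting that $A^\ast$ has complex dimension one. First I would record that on a curve there are no nonzero $(p,2)$-forms, so $L^{p,2}(A^\ast)=0$ and therefore
\[
H^{p,1}_e(A^\ast)=L^{p,1}(A^\ast)\big/\overline{\cR(\dbar_e)},\qquad
H^{1-p,0}_{e^c}(A^\ast)=\ker\big(\dbar_{e^c}\colon L^{1-p,0}\abb L^{1-p,1}\big),
\]
the second equality holding because there are no $(1-p,-1)$-forms to quotient out. Since by hypothesis $\cR(\dbar_e)$ is closed, the first quotient is an honest Hilbert space, and the task reduces to identifying it with a suitable orthogonal complement.

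Next I would invoke the standard functional-analytic fact that, for a densely defined closed operator $T$, one has $\overline{\cR(T)}^{\,\perp}=\ker(T^\ast)$. Applying this to $\dbar_e\colon L^{p,0}\abb L^{p,1}$ and using the closedness of $\cR(\dbar_e)$, the orthogonal decomposition $L^{p,1}=\cR(\dbar_e)\ds\ker(\dbar_e^\ast)$ yields the isometric isomorphism
\[
H^{p,1}_e(A^\ast)=L^{p,1}\big/\cR(\dbar_e)\iso\ker(\dbar_e^\ast).
\]

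Then I would translate $\ker(\dbar_e^\ast)$ into the kernel of $\dbar_{e^c}$ via the conjugate Hodge star. From Section~\ref{ssec:dbar} we have $\dbar_e^\ast=\thet_{e^c}=-\qhodge\,\dbar_{e^c}\,\qhodge$, where $\qhodge\colon L^{p,1}\abb L^{1-p,0}$ is a (conjugate-linear) isometric isomorphism. Because $\qhodge$ is bijective, $f\in\ker(\dbar_e^\ast)$ holds iff $\dbar_{e^c}(\qhodge f)=0$, \ie iff $\qhodge f\in\ker(\dbar_{e^c})$. Hence $\qhodge$ restricts to a bijection $\ker(\dbar_e^\ast)\abb\ker(\dbar_{e^c})=H^{1-p,0}_{e^c}(A^\ast)$, and concatenating the three steps gives the asserted isomorphism.

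I expect the only genuinely delicate point to be the adjoint identity $\dbar_e^\ast=-\qhodge\,\dbar_{e^c}\,\qhodge$, which interchanges the weak and strong extensions; this is precisely the content of the relations $\thet_{s/w}:=\dbar_{w/s}^\ast$ and $\thet_{w/s}=-\qhodge\,\dbar_{w/s}\,\qhodge$ recorded earlier, so I would cite them rather than reprove the underlying closure-of-graph computation. A minor subtlety worth flagging is that $\qhodge$ is conjugate-linear, so the isomorphism is a priori only conjugate-linear; since this still preserves complex dimension, it suffices for the dimension counts used in the Riemann--Roch applications.
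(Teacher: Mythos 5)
Your argument is correct and is essentially the intended one: the paper gives no proof of this lemma but defers to \cite[Thm.~2.3]{Ruppenthal10}, whose proof is exactly your combination of the orthogonal decomposition $L^{p,1}=\cR(\dbar_e)\ds\ker(\dbar_e^\ast)$ (valid because the range is assumed closed, and because $L^{p,2}=0$ in dimension one) with the adjoint identity $\dbar_e^\ast=\thet_{e^c}=-\qhodge\,\dbar_{e^c}\,\qhodge$ recorded in Section~\ref{ssec:dbar}. Your caveat that $\qhodge$ is conjugate-linear, so the isomorphism is a priori only conjugate-linear, is accurate and harmless for all the dimension counts the lemma is used for.
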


For the proof see \eg \cite[Thm.~2.3]{Ruppenthal14}.

\begin{lem}\label{lem:rangesp0} For $p\in\{0,1\}$,
	\begin{align*}H^{p,0}_{s}({A^\ast})&\iso H^{1-p,1}_{w}({A^\ast})=0 \hbox{ and}\\
	H^{p,1}_{s}({A^\ast})&\iso H^{1-p,0}_{w}({A^\ast}).\end{align*}
\end{lem}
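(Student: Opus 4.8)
The plan is to derive both isomorphisms from the $L^2$-duality \prettyref{lem:duality}, applied once to the weak and once to the strong extension. The only nontrivial input required by that lemma is the closedness of the relevant ranges, so the proof splits into verifying two closed-range statements and then reading off the cohomology.

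First I would record that $\dbar_w\colon L^{p,0}(A^\ast)\abb L^{p,1}(A^\ast)$ has closed range for $p\in\{0,1\}$: this is immediate from the computations above, since $H^{p,1}_w(A^\ast)=0$ says exactly that $\dbar_w$ is onto, so $\cR(\dbar_w)=L^{p,1}(A^\ast)$ is the whole space and hence closed. The delicate point, and what I expect to be the main obstacle, is the closed range of the strong operator $\dbar_s\colon L^{p,0}(A^\ast)\abb L^{p,1}(A^\ast)$. For this I would invoke the closed-range theorem: a densely defined closed operator between Hilbert spaces has closed range if and only if its Hilbert-space adjoint does. By the definitions in Section~\ref{ssec:dbar} the adjoint is $\dbar_s^\ast=\thet_w=-\qhodge\dbar_w\qhodge$, and since the conjugated Hodge-$\ast$-operator $\qhodge$ is a norm-preserving bijection interchanging $L^{p,1}(A^\ast)$ with $L^{1-p,0}(A^\ast)$ and $L^{1-p,1}(A^\ast)$ with $L^{p,0}(A^\ast)$, one has $\cR(\thet_w)=\qhodge\,\cR\big(\dbar_w\colon L^{1-p,0}(A^\ast)\abb L^{1-p,1}(A^\ast)\big)$ (the sign in $-\qhodge\dbar_w\qhodge$ being irrelevant for closedness). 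By the previous step this latter range is the whole space, hence closed, so $\cR(\dbar_s^\ast)=\cR(\thet_w)$ is closed, whence $\cR(\dbar_s)$ is closed as well.

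With both closed-range facts in hand I would finish by two applications of \prettyref{lem:duality}. Applying it with $e=w$ and the index $1-p$ in place of $p$ (legitimate by the first closed-range statement) gives $H^{1-p,1}_w(A^\ast)\iso H^{p,0}_s(A^\ast)$; combined with the vanishing $H^{1-p,1}_w(A^\ast)=0$ already established, this yields the first line $H^{p,0}_s(A^\ast)\iso H^{1-p,1}_w(A^\ast)=0$. Applying the lemma with $e=s$ at the index $p$ (legitimate by the second closed-range statement) gives directly $H^{p,1}_s(A^\ast)\iso H^{1-p,0}_w(A^\ast)$, which is the second line. The whole argument is thus formal once the closed range of $\dbar_s$ is secured, and that one step is where the real work lies.
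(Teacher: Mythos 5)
Your proposal is correct and follows essentially the same route as the paper: the vanishing $H^{1-p,1}_w(A^\ast)=0$ gives surjectivity and hence closed range of $\dbar_w$, the isometry $\qhodge$ transfers this to $\thet_w$, the closed-range theorem (via $\dbar_s^\ast=\thet_w$, equivalently $\dbar_s=\thet_w^\ast$) gives closed range of $\dbar_s$, and then \prettyref{lem:duality} yields both isomorphisms. You have correctly identified the closed range of $\dbar_s$ as the only genuine point of the argument.
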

\begin{proof} Recall that $H_{w}^{1-p,1}({A^\ast})=0$. This implies $L^{1-p,1}(A^\ast)=\cR(\dbar_{w})$ and, particularly,
that the range of $\dbar_{w}: L^{1-p,0}\abb L^{1-p,1}$ is closed.  
As $\thet_{w}=-\qhodge\dbar_{w}\qhodge$ and $\qhodge$ is an isometric isomorphism, we conclude that the range of $\thet_{w}: L^{p,1}\abb L^{p,0}$ is closed as well. This is equivalent to the range of $\dbar_{s}=\thet_{w}^\ast: L^{p,0}\abb L^{p,1}$ being closed (standard functional analysis). \prettyref{lem:duality} implies both isomorphisms.\end{proof}

\mpar
To get the complete picture,
we also need to understand the Dolbeault cohomology groups of the closed extensions $\dbar_{s,w}$ and $\dbar_{w,s}$, respectively.

\begin{lem}\label{lem:sholomorphicA}
For $p\in\{0,1\}$,
	\[H_{w,s}^{p,0}({A^\ast})=0.\]
\end{lem}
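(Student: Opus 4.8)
\begin{skoof}
The plan is to transport the problem to the smooth disc $\Delta$ via the Puiseux parametrization $\pi$ and to reduce it to the vanishing of the minimal cohomology $H^{0,0}_{s}(\Delta)$. So let $f\in H^{p,0}_{w,s}(A^\ast)=\ker(\dbar_{w,s}\colon L^{p,0}\abb L^{p,1})$. Since $\dbar_{w,s}\aus\dbar_w$, we have $\dbar_w f=0$, so by \eqref{eq:puiseux-dbarw-zz} (if $p=0$) resp.\ by $H^{1,0}_w(A^\ast)\iso\cOmL$ (if $p=1$) the pullback $\pi^\ast f$ is holomorphic on $\Delta^\ast$. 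For $p=0$ it lies in $\cOLm{1-s}$, so that $h:=t^{s-1}\pi^\ast f\in\cOL$; for $p=1$ the coefficient of the holomorphic $1$\nbd form $\pi^\ast f$ lies in $\cOL$, and I again call it $h$. In either case $h$ is holomorphic on all of $\Delta$ and square-integrable.

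Next I would feed in the strong boundary condition at $\partial X$. By definition of $\dbar_{w,s}$ there is a sequence $f_j\in\dom\dbar_w$ with $\supp f_j\durch\partial X=\leer$, $f_j\abb f$ and $\dbar_w f_j\abb 0$ in $L^2(A^\ast)$. I set $w_j:=t^{s-1}\pi^\ast f_j$ when $p=0$, and let $w_j$ be the coefficient of the $(1,0)$\nbd form $\pi^\ast f_j$ when $p=1$. As $\supp f_j$ avoids $\partial X$, each $w_j$ has compact support in $\Delta$, and \eqref{eq:easypullback} gives $w_j\abb h$ in $L^2(\Delta)$. The same dictionary, together with the trivial estimates $|t^{s-1}|\kl 1$ (for $p=0$) and $|t^{1-s}|\gr 1$ (for $p=1$) on $\Delta$, shows $\|\dbar w_j\|_{L^2(\Delta)}\kl C\,\|\dbar_w f_j\|_{L^2(A^\ast)}\abb 0$. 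Thus I have reduced matters to a compactly supported sequence $\{w_j\}$ on the smooth disc with $w_j\abb h$ and $\dbar w_j\abb 0$ in $L^2(\Delta)$.

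It remains to show $h=0$, \ie $H^{0,0}_s(\Delta)=0$. Let $H\in\cOL$ be a holomorphic antiderivative of $h$; it is again square-integrable on $\Delta$ (compare Taylor coefficients). Since $\partial_{\quer t}\quer H=\quer h$ and each $w_j$ has compact support, integration by parts gives $\langle w_j,h\rangle=-\langle\partial_{\quer t}w_j,H\rangle$, whence by Cauchy--Schwarz $|\langle w_j,h\rangle|\kl\|\dbar w_j\|_{L^2(\Delta)}\,\|H\|_{L^2(\Delta)}\abb 0$. Because $w_j\abb h$ in $L^2(\Delta)$, this forces $\|h\|^2=\lim_j\langle w_j,h\rangle=0$, so $h=0$ and therefore $f=0$. (Alternatively, one obtains $H^{0,0}_s(\Delta)\iso H^{1,1}_w(\Delta)=0$ exactly as in \prettyref{lem:rangesp0}, using \prettyref{lem:duality} and $H^{1,1}_w(\Delta)=0$ on the smooth disc.)

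I expect the one genuinely delicate point to be the case $p=0$, where $\pi^\ast f$ may carry a pole of order up to $s-1$ at the origin and is then \emph{not} square-integrable on $\Delta$ by itself. The device that makes the argument go through is to absorb this pole into the holomorphic factor $t^{s-1}$: since $|t^{s-1}|\kl 1$ on $\Delta$, multiplication by $t^{s-1}$ turns $\pi^\ast f$ into the genuinely holomorphic $L^2$\nbd function $h$ while only improving the $\dbar$\nbd estimate, so the approximating sequence survives the passage to the disc. For $p=1$ no pole occurs, and the weight $|t^{1-s}|\gr 1$ only strengthens the control on $\dbar w_j$.
\end{skoof}
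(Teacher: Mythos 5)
Your proof is correct and follows essentially the same route as the paper: pull back via the Puiseux map, absorb the pole through the bounded holomorphic factor $t^{s-1}$, and observe that the resulting holomorphic $L^2$-function $h$ on $\Delta$ inherits the strong boundary condition at $\partial\Delta$ and must therefore vanish. The only real difference is in the last step, where the paper quotes the generalized Cauchy condition from Lieb--Michel (the trivial extension of $h$ to $\CC$ is holomorphic with compact support, hence zero), while you prove the same vanishing directly by pairing the compactly supported approximants $w_j$ against $\partial_{\bar t}\overline{H}$ for an $L^2$ antiderivative $H$ of $h$ -- a self-contained and equally valid finish.
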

\begin{proof} Let $f\in\ker \dbar_{w,s}=H_{w,s}^{p,0}({A^\ast})$. 
We have showed $\omega\cdot u:=\omega\cdot\pi^\ast f\in L^{p,0}(\Delta)$ with $\omega(t)=t^{s-1}$ if $p=0$ and $\omega(t)\equiv1$ if $p=1$. 
By the extension theorem, we conclude $\dbar_s (\omega\cdot u)=0$ on $\Delta$, where $\dbar_s$ denotes the (strong) closure of $\dbar_\cpt:\sC^\infty_{\cpt;p,0}(\Delta)\abb \sC^\infty_{\cpt;p,1}(\Delta)$. 
The generalized Cauchy condition implies that the trivial extension of $\omega u$ to the complex plane
is a holomorphic $p$-form with compact support (\cf \cite[\S\,V.3]{LiebMichel02}). 
We deduce that $\omega u=0$ and, hence, $f=0$.
\end{proof}

\begin{lem}\label{lem:sholomorphicB}
	\begin{align*}H_{s,w}^{0,0}({A^\ast})&\iso\cOL \hbox{ and}\\
	H_{s,w}^{1,0}({A^\ast})&\iso\cOmLm{s-1}.\end{align*}
As $\cOL\iso\widehat\cO_{L^2}(A)$, the first isomorphism implies that
the $\dq_{s,w}$-holomorphic functions on a singular complex curve are precisely the square-integrable weakly holomorphic functions.
\end{lem}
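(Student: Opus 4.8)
The plan is to transport everything to the unit disc through the Puiseux parametrization and to decode the boundary condition at $\{0\}$ on the level of pullbacks. Recall that $f\auf\pi^\ast f$ already identifies $H^{0,0}_w(A^\ast)$ with $\cOLm{1-s}$ and $H^{1,0}_w(A^\ast)$ with $\cOmL$, and that by \eqref{eq:easypullback} the map $\pi^\ast$ is, up to constants, an isometry $L^{0,1}(A^\ast)\abb L^{0,1}(\Delta)$ and $L^{1,0}(A^\ast)\abb L^{1,0}(\Delta)$, while it sends the $L^2$-norm on $L^{0,0}(A^\ast)$ to the weighted norm with weight $|t|^{2(s-1)}$ and the $L^2$-norm on $L^{1,1}(A^\ast)$ to the weighted norm with weight $|t|^{2(1-s)}$. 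Since $\pi^\ast$ is injective, it is enough to show that it carries $\ker\dbar_{s,w}$ exactly onto $\cOL$ when $p=0$ and onto $\cOmLm{s-1}=t^{s-1}\cOmL$ when $p=1$, and I would prove the two inclusions separately.

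For the inclusion ``$\um$'' I would exhibit the approximating sequence by a logarithmic cutoff. Given $v\in\cOL$ (resp. a holomorphic form $u\in t^{s-1}\cOmL$), set $f:=\iota^\ast v$ (resp. $f:=\iota^\ast u$) and $f_j:=(\chi_j\nach\iota)\,f$, where $\chi_j$ vanishes on $\{|t|\kl r_j^2\}$, equals $1$ on $\{|t|\gr r_j\}$ and interpolates logarithmically, so that $|\dbar\chi_j|^2\lesssim(|t|\log(1/r_j))^{-2}$ on the transition annulus. Then $\supp f_j\durch\{0\}=\leer$ and $f_j\abb f$ in $L^2$ by dominated convergence. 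The essential point is $\dbar_w f_j\abb0$: after pulling back, $\|\dbar_w f_j\|^2$ equals $\int|\dbar\chi_j|^2|\pi^\ast v|^2$ (resp. the analogous integral for $u$) with the appropriate weight, and in both cases the weight combines with the order of vanishing of the datum to leave an integrand bounded near $0$ (for $p=0$ the $(0,1)$-weight is trivial; for $p=1$ the factor $|t|^{2(1-s)}$ is cancelled by the vanishing of order $s-1$). Hence $\|\dbar_w f_j\|^2\lesssim 1/\log(1/r_j)\abb0$, and the datum lies in $\ker\dbar_{s,w}$.

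The inclusion ``$\aus$'' is the crux. Let $f\in\ker\dbar_{s,w}$ with $f_j\in\dom\dbar_w$, $\supp f_j\durch\{0\}=\leer$, $f_j\abb f$ and $\dbar_w f_j\abb0$ in $L^2$. Put $u_j:=\pi^\ast f_j$; these vanish near $t=0$. Multiplying by a suitable power of $t$ produces forms $b_j$ on $\Delta$ that vanish near $0$ and satisfy $\dbar b_j\abb0$ in $L^2(\Delta)$: for $p=0$ take $b_j:=u_j$ (here $\dbar u_j\abb0$ because the $(0,1)$-weight is trivial), for $p=1$ take $b_j:=t^{1-s}u_j$. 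I would then invoke the standard $L^2$-solvability of $\dbar$ on the disc to find $\beta_j$ with $\dbar\beta_j=\dbar b_j$ and $\|\beta_j\|_{L^2}\kl C\|\dbar b_j\|_{L^2}\abb0$, so that $\phi_j:=b_j-\beta_j$ is holomorphic on all of $\Delta$. For $p=0$ one gets $\phi_j\abb v:=\pi^\ast f$ in $L^2(\Delta,|t|^{2(s-1)})$; in this space the monomials $\{t^m\}_{m\gr-(s-1)}$ are orthogonal with finite norms, so, as each $\phi_j$ is holomorphic, comparing coefficients forces the principal part of $v$ to vanish, i.e. $v\in\cOL$. For $p=1$ one multiplies back: $t^{s-1}\phi_j=u_j-t^{s-1}\beta_j\abb\pi^\ast f$ in $L^{1,0}(\Delta)$, each $t^{s-1}\phi_j$ vanishes to order $\gr s-1$, and this subspace of holomorphic forms is closed, so the limit $\pi^\ast f$ vanishes to order $\gr s-1$, i.e. $\pi^\ast f\in t^{s-1}\cOmL$.

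The hard part is exactly this last inclusion: the approximants $f_j$ are only $L^2$ and not $\dbar$-closed, so one cannot read off Taylor or Laurent coefficients from them directly. The function of the $\dbar$-solve is to replace them by genuinely holomorphic $\phi_j$ at a cost tending to $0$, after which the orthogonality of monomials (weighted for $p=0$, unweighted after multiplication for $p=1$) does the remaining bookkeeping. Injectivity of $\pi^\ast$ then upgrades the two inclusions to the claimed isomorphisms; and since $\pi$ is the normalization, $\pi^\ast$ restricts to an isomorphism $\cOL\iso\widehat\cO_{L^2}(A)$, which gives the stated identification of the $\dbar_{s,w}$-holomorphic $L^2$-functions with the square-integrable weakly holomorphic functions.
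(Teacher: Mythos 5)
Your proof is correct, but in the crucial inclusion $\pi^\ast(\ker\dbar_{s,w})\aus\cOL$ you use a genuinely different mechanism than the paper. The direction ``$\um$'' is essentially the paper's: the same logarithmic-cutoff construction (the paper takes the iterated-logarithm cutoff of Pardon--Stern, you take the single-log annulus cutoff between $r_j^2$ and $r_j$; both give $\|\dbar\chi_j\|_{L^2}\abb 0$, which suffices because the holomorphic datum is bounded near the puncture). For ``$\aus$'' the paper first multiplies by a cutoff near $\partial\Delta$ so that the truncated pullbacks $u_j$ have compact support in $\Delta^\ast$, and then uses the reproducing identity $u_j=P(\partial u_j/\partial\quer\zeta)$ for the Cauchy transform; passing to the limit gives $u=P(\partial u/\partial\quer\zeta)\in L^2(\Delta)$ in one stroke, and the extension theorem finishes. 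You instead keep the full approximants, solve $\dbar\beta_j=\dbar u_j$ with an $L^2$ bound, and work with the holomorphic corrections $\phi_j=u_j-\beta_j$; the orthogonality of monomials in the weighted Bergman space (respectively the closedness of the subspace of holomorphic forms vanishing to order $\gr s-1$) then kills the principal part of $\pi^\ast f$. This is a legitimate alternative --- note that it relies, as the paper's argument also does in its last line, on the a priori information $\pi^\ast f\in\cOLm{1-s}$ coming from the computation of $\ker\dbar_w$. A further difference: you run the $(1,0)$ case through the same scheme with the weight $t^{1-s}$, whereas the paper reduces it to the $(0,0)$ case by writing $f=g\,dz$ and using $\pi^\ast(dz)=st^{s-1}dt$. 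The paper's route makes the unweighted $L^2$ membership of $\pi^\ast f$ drop out of a single identity; yours avoids the boundary cutoff and treats both degrees uniformly. The concluding identification $\cOL\iso\widehat\cO_{L^2}(A)$ via the normalization (Riemann extension) is the same in both.
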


\begin{proof} 
First, we prove that $\cOL=\pi^\ast(\ker \dbar_{s,w}:L^{0,0}(A^\ast)\abb L^{0,1}(A^\ast))$.

\mpar
i) For $v\in\cOL$, we claim that $g:=\iota^\ast v\in\ker\dbar_{s,w}$. To see that,
choose smooth functions $\tilde\chi_k:\RR\abb [0,1]$ with $\tilde\chi_k|_{(-\infty,k]}=0$, 
$\tilde\chi_k|_{[k+1,\infty)}=1$ and $|\tilde\chi_k'|\kl 2$. 
We get $(\tilde\chi_k\nach\log\nach|\log|)'(\rho)=\frac{\tilde\chi'_k(\log|\!\log \rho|)}{\rho\log \rho}$. 
We define $\chi_k:{A^\ast}\abb [0,1], (z,w)\auf \tilde\chi_k(\log|\log|z||)$ (which is inspired by \cite[p.~617]{PardonStern91}) and get $\supp \dbar\chi_k\aus {A^\ast}\durch\Delta^n_{\eps_k}$, where $\eps_k:=\exp(-\exp(k))\abb 0 \konv k \infty$. 
As $v\in L^{0,0}(\Delta)$, we have $g\in z^{1-\frac 1s} L^{0,0}({A^\ast})\aus L^{0,0}({A^\ast})$. 
Then $g\cdot\chi_k\abb g$ in $L^2$. As a holomorphic function, $v$ is bounded in a neighborhood of $0$. 
Therefore,
	\begin{equation*}\begin{split}\|g\dbar \chi_k\|^2_{A^\ast}
	&=\left\|g\cdot \frac{\tilde\chi'_k(\log|\log |z||)}{|z|\log |z|}\dbar |z|\right\|^2_{{A^\ast}\durch\Delta^n_{\eps_k}}
	\lesssim\ \left\|g\cdot\frac{1}{|z|\log |z|}\right\|^2_{{A^\ast}\durch\Delta^n_{\eps_k}}\\
	&\sim\left\|v\cdot\frac{{|t|}^{s-1}}{{|t|}^s\log {|t|}^s}\right\|^2_{\Delta_{\eps_k}} \lesssim\ \left\|\frac{1}{|t|\log |t|}\right\|^2_{\Delta_{\eps_k}}
	=\int_{\Delta_{\eps_k}}\frac{1}{{|t|}^2\log^2|t|}dV\\
	&=2\pi\int_0^{\eps_k}\frac{\rho}{\rho^2\log^2 \rho}d\rho\sim\left[-\frac{1}{\log \rho}\right]_0^{\eps_k}\abb 0,\konv{k}{\infty}.\end{split}
\end{equation*}
Hence, $\dbar(g\chi_k)=g\dbar\chi_k\abb 0=\dbar_w g$ in $L^2$. So, $g\in\dom\dbar_{s,w}$.

\mpar
ii) $\pi^\ast(\ker \dbar_{s,w})\aus\cOL$ (\cf the proof of Lem.~6.2 in \cite{Ruppenthal14}):
Let $g$ be in $\ker\dbar_{s,w}$, \ie there are $g_j$ in $L^2({A^\ast})$ with $g_j\abb g$, $\dbar g_j\abb 0$ in $L^2({A^\ast})$ and $0\notin\supp g_j$. 
Let $\chi\in\sC^\infty_{\cpt}(\Delta,[0,1])$ be identically $1$ on $\Delta_{\spfrac 12}$. 
We define $u:=\chi \pi^\ast g$ and $u_j:=\chi \pi^\ast g_j$.
It follows that $t^{s-1}u_j\abb t^{s-1}u$ and $\dbar u_j\abb\dbar u$ in $L^2(\Delta)$. 
Let $P:L^2(\Delta)\abb L^2(\Delta)$ be the Cauchy-operator on the punctured disc, \ie
	\[[P(h)](t):=\frac 1{2\pi\ii}\int_{\Delta^\ast}\frac{h(\zeta)}{\zeta-t}d\zeta\wedge d\quer\zeta.\]
Since the support of $u_j$ is away from 0 and $\partial\Delta$, we get $u_j=P\left(\frac{\partial u_j}{\partial\quer\zeta}\right)$.
The $L^2$\nbd continuity of $P$ and $\dbar u_j\abb \dbar u$ in $L^2$ imply that
	\[u_j=P\left(\frac{\partial u_j}{\partial\quer\zeta}\right)\abb P\left(\frac{\partial u}{\partial\quer\zeta}\right)\]
in $L^2$. Since $t^{s-1}$ is bounded, we obtain $t^{s-1}u_j\abb t^{s-1}P\left(\frac{\partial u}{\partial\quer\zeta}\right)$ and, hence, $u=P\left(\frac{\partial u}{\partial\quer\zeta}\right)$ in $L^2$.
That yields $\pi^\ast g \in L^2(\Delta)$. With $\pi^\ast g\in\cOLm{1-s}$ and the extension theorem, we conclude $\pi^\ast g\in\cOL$.

\mpar
Second, we claim that $\ker(\dbar_{s,w}:L^{1,0}(A^\ast)\abb L^{1,1}(A^\ast))\iso\cOmLm{s-1}$.
$f=g dz$ is in $\ker\dbar_{s,w}$ iff $g\in\ker\dbar_{s,w}$. This is equivalent to $\pi^\ast g\in\cOL$. Since $\pi^\ast(dz)=t^{s-1}dt$, we infer 
that $\pi^\ast:H_{s,w}^{1,0}({A^\ast})\abb\cOmLm{s-1}, \pi^\ast f=t^{s-1}\pi^\ast gdt$ is an isomorphism.

\mpar
The Riemann extension theorem (\prettyref{thm:weak-Riemann-extension}) implies $\cOL\iso\widehat\cO_{L^2}(A)$ and the last statement.
\end{proof}

\begin{lem}\label{lem:sholomorphicC}
For $p\in\{0,1\}$,
$\cR(\dbar_{w,s}: L^{p,0}({A^\ast})\abb L^{p,1}({A^\ast}))$ and $\cR(\dbar_{s,w}: L^{p,0}({A^\ast})\abb L^{p,1}({A^\ast}))$ both are closed,
and
\[H_{w,s}^{p,1-q}({A^\ast})\iso H_{s,w}^{1-p,q}({A^\ast})\ \mbox{ for } q\in\{0,1\}.\]
\end{lem}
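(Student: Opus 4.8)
The plan is to deduce the whole lemma from a single coercive (basic) estimate for $\dbar_{w,s}$. Once the range of $\dbar_{w,s}$ is known to be closed, the closedness of $\cR(\dbar_{s,w})$ and both duality isomorphisms follow formally from \prettyref{lem:comp-adjoint} and the closed-range theorem, in the same way as \prettyref{lem:duality} is obtained for the pair $(\dbar_w,\dbar_s)$.

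So the first and main step is to prove
\[\|\psi\|_{A^\ast}\kl C\,\|\dbar_{w,s}\psi\|_{A^\ast}\quad\text{for all }\psi\in\dom\dbar_{w,s},\ p\in\{0,1\}.\]
I would transfer this to $\Delta$ using the Puiseux isometries \eqref{eq:easypullback}: there the strong condition of $\dbar_{w,s}$ at $\partial A$ becomes the minimal condition at $\partial\Delta$, while the interior point $0\in\Delta$ carries only the weak condition and is removable by the $L^2$\nbd extension theorem (\prettyref{thm:extension}). Hence any $\psi\in\dom\dbar_{w,s}$ pulls back to a graph\nbd limit of forms in $\sC^\infty_{\cpt}(\Delta)$ (mollify the approximants, which have support away from $\partial A$, across $0$ by \prettyref{thm:extension}); for such compactly supported forms the Cauchy--Pompeiu formula expresses the coefficient $v$ as $v=P(\dbar v)$, where $P$ is the Cauchy operator on the punctured disc from the proof of \prettyref{lem:sholomorphicB}. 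Its $L^2$\nbd continuity passes this identity to the limit and gives $\|v\|_{L^2(\Delta)}\kl\|P\|\,\|\dbar v\|_{L^2(\Delta)}$. The weights are favorable, which is what makes the transfer succeed: for $p=0$ the source isometry carries $t^{s-1}$ with $|t|^{s-1}\kl1$ and the target none, while for $p=1$ the target carries $t^{1-s}$ with $|t|^{1-s}\gr1$ and the source none. In both cases one has $\|\psi\|_{A^\ast}\kl\|v\|_{L^2(\Delta)}$ and $\|\dbar v\|_{L^2(\Delta)}\kl\|\dbar_{w,s}\psi\|_{A^\ast}$, so the chain $\|\psi\|_{A^\ast}\kl\|v\|_{L^2(\Delta)}\kl\|P\|\,\|\dbar v\|_{L^2(\Delta)}\kl\|P\|\,\|\dbar_{w,s}\psi\|_{A^\ast}$ closes. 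I expect this weight bookkeeping, and the verification that $P$ is compatible with the weighted graph\nbd convergence, to be the only real obstacle; everything is elementary once the signs are tracked. The estimate immediately yields that $\cR(\dbar_{w,s}\colon L^{p,0}\abb L^{p,1})$ is closed for $p\in\{0,1\}$, and it reproves $\ker\dbar_{w,s}=H_{w,s}^{p,0}(A^\ast)=0$ of \prettyref{lem:sholomorphicA}.

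Finally I would transfer closedness to $\dbar_{s,w}$ and extract the duality. By \prettyref{lem:comp-adjoint}, $\dbar_{s,w}^\ast=\thet_{w,s}=-\qhodge\dbar_{w,s}\qhodge$ with $\qhodge$ an isometric isomorphism; since $\cR(T)$ is closed iff $\cR(T^\ast)$ is, the previous step shows $\cR(\dbar_{s,w}\colon L^{p,0}\abb L^{p,1})$ is closed for $p\in\{0,1\}$ as well. With all ranges closed, the orthogonal decompositions $L^{p,1}=\cR(\dbar_{w,s})\oplus\ker\dbar_{w,s}^\ast$ and $L^{p,1}=\cR(\dbar_{s,w})\oplus\ker\dbar_{s,w}^\ast$, together with $\dbar_{w,s}^\ast=\thet_{s,w}$ and $\dbar_{s,w}^\ast=\thet_{w,s}$ from \prettyref{lem:comp-adjoint}, identify
\[H_{w,s}^{p,1}(A^\ast)\cong\ker\thet_{s,w}\cong H_{s,w}^{1-p,0}(A^\ast),\qquad H_{s,w}^{p,1}(A^\ast)\cong\ker\thet_{w,s}\cong H_{w,s}^{1-p,0}(A^\ast),\]
the middle isomorphisms being induced by $\qhodge$. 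The first chain is precisely the case $q=0$. The second, combined with $H_{w,s}^{1-p,0}(A^\ast)=0$ from \prettyref{lem:sholomorphicA}, gives $H_{s,w}^{p,1}(A^\ast)=0$, which is the case $q=1$ (both sides vanish). This is exactly the argument of \prettyref{lem:duality} applied to the adjoint pair $(\dbar_{w,s},\dbar_{s,w})$, and it establishes the stated isomorphisms $H_{w,s}^{p,1-q}(A^\ast)\cong H_{s,w}^{1-p,q}(A^\ast)$.
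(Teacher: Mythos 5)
Your reduction of the lemma to the closedness of $\cR(\dbar_{w,s})$, and the subsequent transfer to $\dbar_{s,w}$ and the duality isomorphisms via \prettyref{lem:comp-adjoint}, the closed-range theorem and the harmonic decompositions, are correct and agree with the paper's own strategy. Where you diverge is the main analytic step: the paper does not prove an a priori estimate, but identifies $\pi^\ast\dbar_{w,s}\iota^\ast$ with a closed extension of $\dbar_\cpt$ between \emph{weighted} spaces on $\Delta$, passes to its Hilbert-space adjoint, recognizes the twisted adjoint as a weak $\dbar$-operator on the disc, and deduces closed range from the surjectivity of that operator (solvability of $\dbar$ on $\Delta$).

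Your coercive estimate fails as written for $p=0$ when $s\gr 2$. By \eqref{eq:easypullback} the pullback of $\dom\big(\dbar_{w,s}\colon L^{0,0}(A^\ast)\abb L^{0,1}(A^\ast)\big)$ lies in the weighted space $t^{1-s}L^{0,0}(\Delta)$, not in $L^{0,0}(\Delta)$. Concretely, take $\chi\in\sC^\infty_{\cpt}(\Delta)$ with $\chi\equiv 1$ near $0$ and set $f:=\iota^\ast(t^{1-s}\chi)$: then $f\in\dom\dbar_w$ with support away from $\partial A$, so the constant sequence shows $f\in\dom\dbar_{w,s}$, yet $v=\pi^\ast f=t^{1-s}\chi$ is not square-integrable near the origin. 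Hence the first link $\|\psi\|_{A^\ast}\kl\|v\|_{L^2(\Delta)}$ of your chain is vacuous, the identity $v=P(\dbar v)$ is false (the right-hand side is in $L^2(\Delta)$, the left is not), and $v$ cannot be a graph limit of forms in $\sC^\infty_{\cpt}(\Delta)$ in the relevant norms: for such approximants $v_j=P(\dbar v_j)$ and the $L^2$-continuity of $P$ would force $v=P(\dbar v)\in L^2(\Delta)$. The appeal to \prettyref{thm:extension} does not repair this, because that theorem requires $v\in L^{0,0}_{\loc}(\Delta)$ across the puncture; indeed $\dbar(t^{1-s}\chi)$ acquires distributional terms supported at $0$. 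So the weight $|t|^{s-1}\kl 1$ on the source is the \emph{unfavorable} direction: it enlarges the domain. The argument can be repaired by twisting before applying the Cauchy transform: $w:=t^{s-1}v$ is in $L^2(\Delta)$, the approximants $t^{s-1}v_j$ are compactly supported with $\dbar(t^{s-1}v_j)=t^{s-1}\dbar v_j\in L^2(\Delta)$ (now \prettyref{thm:extension} does apply), whence $w=P(t^{s-1}\dbar v)$ and $\|\psi\|_{A^\ast}\sim\|w\|\kl\|P\|\,\|t^{s-1}\dbar v\|\kl\|P\|\,\|\dbar v\|\sim\|P\|\,\|\dbar_{w,s}\psi\|_{A^\ast}$, which is the estimate in the correct norms. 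Your $p=1$ case is fine as written, since there the source carries no weight and the target weight $|t|^{1-s}\gr 1$ only strengthens the right-hand side.
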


\begin{proof} 
Since $\dbar_{w,s}^\ast=\thet_{s,w}$, $\dbar_{s,w}^\ast=\thet_{w,s}$, $\thet_{s,w}=-\qhodge\dbar_{s,w}\qhodge$ and $\thet_{w,s}=-\qhodge\dbar_{w,s}\qhodge$ 
(see \prettyref{lem:comp-adjoint}), it is easy to see that \prettyref{lem:duality} holds for $\dbar_{w,s}$ and $\dbar_{s,w}$. 
We remark (\cf the proof of \prettyref{lem:rangesp0}) that
	\begin{equation*}\begin{split} \cR(\dbar_{s,w}) \hbox{ is closed } &\genau \cR(\thet_{w,s}) \hbox{ is closed } \\ 	&\genau \cR(\dbar_{w,s}) \hbox{ is closed.}\end{split}\end{equation*}
Therefore, it is enough to show that $\cR(\dbar_{w,s})$ is closed.

\par
Let $\ph:\Delta^\ast\abb\RR$ be the smooth function defined by $\ph(t):=(1-s)\log |t|^2$. Then $L^{p,q}(\Delta,\ph)=t^{1-s} L^{p,q}(\Delta)$
for the $L^2$-space $L^{p,q}(\Delta,\ph)$ with weight $e^{-\ph}$.

We set $T_1:=\pi^\ast\dbar_{w,s}\iota^\ast:  L^{0,0}(\Delta,\ph)\abb L^{0,1}(\Delta)$. 
The extension theorem implies that $T_1$ is the (strong) closure of $\dbar_\cpt: L^{0,0}(\Delta,\ph)\abb L^{0,1}(\Delta)$. Therefore, $T_1^\ast$ is the weak closed extension of $\thet_\cpt^{\ph}: L^{0,1}(\Delta)\abb L^{0,0}(\Delta,\ph)$ which is defined by
	\[(\dbar_\cpt \alpha,\beta)=(\alpha,\thet_\cpt^{\ph} \beta)_\ph=\int\langle\alpha,\thet^\ph_\cpt\beta\rangle e^{-\ph}dV.\]
We set $\qhodge_\ph:=e^{-\ph}\qhodge$. Then $T_2:=-\qhodge_\ph T_1^\ast \qhodge$ is the weak closed extension of $\dbar_\cpt: L^{1,0}(\Delta)\abb L^{1,1}(\Delta,-\ph)$  because integration by parts implies $\thet_\cpt^\ph=-\qhodge_\ph\dbar_\cpt\qhodge$:
	\begin{equation*}\begin{split}(\alpha,\qhodge_{-\ph}\dbar_\cpt\qhodge \beta)_\ph
	&=\int \alpha\wedge \qhodge_{\ph}\qhodge_{-\ph}\dbar_\cpt\qhodge \beta=(-1)^{1-p}\int \alpha\wedge \dbar_\cpt\qhodge \beta\\
	&=-\int \dbar_\cpt \alpha\wedge\qhodge \beta =-(\dbar_\cpt \alpha,\beta).\end{split}\end{equation*}
Hence, $T_2$ is $\dbar_w: L^{1,0}(\Delta)\abb L^{1,1}(\Delta,-\ph)$ in sense of distributions. Since for all $u\in L^{1,1}(\Delta,-\ph)=t^{s-1} L^{1,1}(\Delta)\aus L^{1,1}(\Delta)$ there is a $v\in L^{1,0}(\Delta)$ with $T_2 v=\dbar_w v= u$, the range of $T_2$ is closed. Thus, the range of $T_1^\ast$ and the range of $\dbar_{w,s}=\iota^\ast T_1\pi^\ast: L^{0,0}({A^\ast})\abb L^{0,1}({A^\ast})$ are closed as well.\par
We set $S_1:=\pi^\ast\dbar_{w,s}\iota^\ast: L^{1,0}(\Delta)\abb L^{1,1}(\Delta,-\ph)$ and $S_2:=-\qhodge S_1^\ast\qhodge_{\ph}$. Then $S_2$ is the weak closure of $\dbar_	\cpt: L^{0,0}(\Delta,\ph)\abb L^{0,1}(\Delta)$.
	\begin{equation*}\begin{split} \cR(S_2)&=\{u\in L^{0,1}(\Delta):\exists v\in L^{0,0}(\Delta,\ph)=t^{1-s} L^{0,0}(\Delta) \hbox{ with } S_2 v=u\}\\
	 &\um \{u\in L^{0,1}(\Delta):\exists v\in L^{0,0}(\Delta) \hbox{ with } \dbar_w v=u\}= L^{0,1}(\Delta).\end{split}\end{equation*} 
Therefore, $\cR(S_2)= L^{0,1}(\Delta)$  is closed. This implies the claim.\end{proof}

Summarizing, we computed (with $s=\mult_0 A$):
	\settowidth{\arraycolsep}{.}
	\begin{equation}\begin{array}{rclcl}
	H_w^{0,0}(A^\ast)&\iso & H_s^{1,1}(A^\ast) &\iso &\cOLm{1-s},\\[.8ex] 
	H_w^{1,0}(A^\ast)&\iso &H_s^{0,1}(A^\ast)&\iso &\cOmL,\\[.8ex] 
	H_w^{p,1}(A^\ast)&=&H_s^{1-p,0}(A^\ast)&=&0,\\[.8ex] 
	H_{s,w}^{0,0}(A^\ast)&\iso &H_{w,s}^{1,1}(A^\ast)&\iso &\cOL,\\[.8ex] 
	H_{s,w}^{1,0}(A^\ast)&\iso &H_{w,s}^{0,1}(A^\ast)&\iso &\cOmLm{s-1},\hbox{ and}\\[.8ex] 
	H_{s,w}^{p,1}(A^\ast)&=&H_{w,s}^{1-p,0}(A^\ast)&=&0.
	\end{array}\end{equation}

\bpar
\section{\texorpdfstring{$L^2$}{L\textasciicircum 2}-cohomology of complex curves}\label{sec:main}

We will prove Theorem \ref{thm:RR-w} in this section. As a preparation, we consider the following local situation:
Let $A$ be a locally irreducible analytic set of dimension one in a domain $\Omega\Subset\CC^n_{zw_1...w_{n-1}}$ with $\Sing A=\{0\}$, 
let $dV$ denote the volume form on $A^\ast:=\Reg A$ which is induced by the Euclidean metric
and let $z:A\abb\CC_{z}$ be the projection on the first coordinate. 
Let us mention (\cf \eg Prop.\ in \cite[Sect.~8.1]{Chirka89}):

\begin{thm} 
The set of all tangent vectors at a point of a one-dimensional irreducible analytic set in $\CC^n$ is a complex line.
\end{thm}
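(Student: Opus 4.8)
The plan is to reduce at once to the only nontrivial case. Since an irreducible one-dimensional analytic set has discrete singular locus, at a regular point the tangent space is already a complex line (as $\dim A=1$), so we may assume the chosen point is the singular point, which we place at the origin $0$, and we interpret ``the set of all tangent vectors'' as the tangent cone $C_0(A)$. Geometrically $C_0(A)$ is the union of all limits of the complex tangent lines $T_pA^\ast$ as the regular point $p\abb 0$ along $A$ (equivalently, limits of secant directions from $0$), and the goal is to produce a single line exhausting all these limits.

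First I would invoke irreducibility to obtain, as in \prettyref{sec:resolution}, a Puiseux parametrization $\pi\colon\Delta\abb A$, $\pi(t)=(a_1(t),\dots,a_n(t))$, with each $a_i$ holomorphic, $a_i(0)=0$, and $\pi$ a homeomorphism onto a neighbourhood of $0$ that is biholomorphic over $A^\ast$. As $\dim A=1$, the map $\pi$ is nonconstant, so $m:=\min_i\ord_0 a_i$ is finite with $m\gr 1$; writing $a_i(t)=c_i t^m+O(t^{m+1})$ (so $c_i\ungl 0$ exactly when $\ord_0 a_i=m$) produces a well-defined nonzero leading vector $v_0:=(c_1,\dots,c_n)$. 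The one computation the whole argument rests on is the expansion
\[
\pi(t)=t^{m}\bigl(v_0+O(t)\bigr),\qquad \pi'(t)=t^{m-1}\bigl(m\,v_0+O(t)\bigr),\qquad t\abb 0.
\]

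Next I would read off the tangent cone. For a regular point $p=\pi(t)$ the tangent line is $T_pA^\ast=\CC\,\pi'(t)=\CC\bigl(m\,v_0+O(t)\bigr)$, which converges, as complex lines, to $\CC\,v_0$ when $t\abb 0$. Since every regular point near $0$ is of the form $\pi(t)$ and the limiting line is always the same, this gives $C_0(A)=\CC\,v_0$, a single complex line, which is the assertion. To reconcile this with the algebraic definition of $C_0(A)$ via initial homogeneous polynomials used in \prettyref{sec:resolution}, I would check that for every $f\in\sJ\!_{A,0}$ the identity $f(\pi(t))\equiv 0$ forces its lowest-order term $t^{mk_0}f^\ast(v_0)$ to vanish, whence $f^\ast(v_0)=0$ and $\CC\,v_0\aus C_0(A)$; the geometric computation above already supplies the reverse inclusion.

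The step I expect to be delicate is the bookkeeping guaranteeing that $v_0$ is genuinely nonzero and that no cancellation occurs at the bottom order: because the homogeneous component $f_k$ contributes to $f(\pi(t))$ in order $mk$, and these orders strictly increase with $k$, the term of lowest order in $t$ is exactly $t^{mk_0}f^\ast(v_0)$ and cannot be cancelled, which is precisely what pins down $f^\ast(v_0)=0$. The one genuinely nontrivial external input is the equivalence of the geometric (limits of tangent, resp.\ secant, lines) and algebraic (zero set of the initial ideal) descriptions of $C_0(A)$; rather than reprove it I would cite it from Chirka's treatment, after which the explicit line $\CC\,v_0$ finishes the proof.
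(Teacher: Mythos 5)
The paper itself does not prove this statement: it is quoted with a pointer to the Proposition in \cite[Sect.~8.1]{Chirka89}, so there is no in-house argument to compare against. Your proposal supplies an actual proof, and it is correct; it is in essence the standard (and Chirka's) argument. The key computation $\pi(t)=t^{m}\bigl(v_0+O(t)\bigr)$, $v_0\ungl 0$, with $m=\min_i\ord_0 a_i$, is exactly what makes everything work: it shows at once that all limits of rescaled secant vectors $\lambda_j\,\pi(t_j)$ lie in $\CC v_0$ (since the direction of $\pi(t_j)$ converges to $[v_0]$ in $\CP^{n-1}$), that the tangent lines $\CC\,\pi'(t)=\CC\bigl(m v_0+O(t)\bigr)$ converge to the same line, and --- via the order bookkeeping $\ord_t f_k(\pi(t))\gr mk$ with the bottom coefficient equal to $f^\ast(v_0)$ --- that $f^\ast(v_0)=0$ for all $f\in\sJ\!_{A,0}$, so $\CC v_0\aus C_0(A)$. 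You are right to flag the reverse inclusion as the only point requiring an external input: the algebraic cone cut out by the initial ideal could a priori be larger than the set of limits of secants, and identifying the two is Whitney's theorem (Chirka, Sect.~8.4), which you correctly cite rather than reprove. (Alternatively, if one takes ``the set of all tangent vectors'' in Chirka's original sense --- limits of rescaled secants, which is what the quoted Proposition refers to --- your secant computation alone already finishes the proof with no appeal to Sect.~8.4 at all.) The only other implicit ingredient is the existence of the Puiseux parametrization for a germ irreducible at $0$, which the paper has already set up in \prettyref{sec:resolution}, so your reduction is legitimate. In short: where the paper cites, you prove, and the proof is sound.
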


Thus, we can assume that $C_0(A)=\CC_{z}\kreuz \{0\}\subset \CC_z \times \CC^{n-1}_{w_1...w_{n-1}}$,
and, therefore, $dV\sim dz\wedge d\quer z$ (by shrinking $\Omega$ if necessary).
\par
Let $\pi:M\abb A$ be a resolution of $A$, $x_0:=\pi^{-1}(0)$. Then $Z=(\pi^\ast(z))$ is the unreduced exceptional divisor of the resolution. 
After shrinking $A$ and $M$ again, we can assume that $M$ is covered by a single chart
$\psi:M\abb \CC$ with $x_0\in M$ and $\psi(x_0)=0$. We set $\zeta:=\pi^\ast(z)$ and get $Z=(\zeta)$. $|Z|=(\psi)$ implies $Z-|Z|=(\frac\zeta\psi)$. 
We obtain
	\[\pi^\ast(dz)=d(\pi^\ast z)=\frac{\partial \zeta}{\partial \psi}d\psi\sim \frac\zeta\psi d\psi.\]
Therefore, $\pi^\ast(dV)\sim\left|\frac\zeta\psi \right|^2 d\psi\wedge d\quer\psi$,
and we conclude (recall the definition of line bundles $L_D$ from Section \ref{sec:resolution}):
\begin{equation}
\label{eq:pullback}\begin{split}f\in L^{p,q}(A^\ast) &\genau \left|\frac\zeta\psi \right|^{1-p-q}\hspace{-7mm}\cdot\,\pi^\ast f\in L^{p,q}(M)\\
	&\genau \pi^\ast f\in L^{p,q}(M,L_{(1-p-q)(Z-|Z|)}),\end{split}
\end{equation}
Nagase stated this equivalence already in Lem.~5.1 of \cite{Nagase90Kyoto}. By use of the extension \prettyref{thm:extension}, we get:
	\begin{equation}\label{eq:dbarw}\begin{split}f\in&\ \dom \big(\dbar_w:L^{p,0}(A^\ast)\abb L^{p,1}(A^\ast)\big)\\
	\genau \pi^\ast f\in&\ \dom \big(\dbar_w:L^{p,0}(M,L_{(1-p)(Z-|Z|)})\abb L^{p,1}(M,L_{p(|Z|-Z)})\big).\end{split}
\end{equation}

\medskip
The essential observation for the proof of Theorem \ref{thm:RR-w} is the following:

\begin{thm}\label{thm:computedCo} 
Let $X$ be a compact complex curve and $L\rightarrow X$ a holomorphic line bundle. Let $\pi:M\abb X$ be a resolution of $X$ with exceptional divisor $Z$, and $D$ a divisor on $M$ such that $\pi^* L \cong L_D$, \ie $\cO(\pi^* L) \cong \cO(D)$. Then
	\begin{align*}H_w^{0,0}(X^\ast, L)&\iso H^0(M,\cO(Z-|Z|+D)),\\
	H_w^{0,1}(X^\ast, L) &\iso H^1(M,\cO(Z-|Z|+D)),\\
	H_w^{1,0}(X^\ast, L) &\iso H^0(M,\Omega^1(D)) \iso H^1(M,\cO(-D)), \hbox { and}\\
	H_w^{1,1}(X^\ast, L) &\iso H^1(M,\Omega^1(D)) \iso H^0(M,\cO(-D)).\end{align*}
\end{thm}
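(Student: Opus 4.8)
The plan is to realize each group $H_w^{p,q}(X^\ast,L)$ as the cohomology of a coherent sheaf on the resolution $M$, by running an abstract Dolbeault (fine-resolution) argument in which the analytic input is supplied entirely by the local computations of \prettyref{sec:local}. For a fixed $p\in\{0,1\}$ I would introduce on $M$ the sheaves $\mathcal{L}^{p,q}$ whose sections over an open $U\aus M$ are the $L^2_\loc$ $(p,q)$-forms on $U\durch X^\ast$ with values in $L$, measured with the \emph{singular} metric coming from $X^\ast$, subject to square-integrability near each point of $|Z|$. With this convention $\dbar_w\colon\mathcal{L}^{p,0}\abb\mathcal{L}^{p,1}$ is honestly the distributional $\dbar$ (the bundle $L$ does not change along the complex), the sheaves $\mathcal{L}^{p,q}$ are modules over the smooth functions on $M$ and hence fine and acyclic, and since $X$ is compact and $|Z|$ is finite one has $\Gamma(M,\mathcal{L}^{p,q})=L^{p,q}(X^\ast,L)$. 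Writing $\mathcal{H}^p:=\ker(\dbar_w\colon\mathcal{L}^{p,0}\abb\mathcal{L}^{p,1})$, the theorem reduces to two sheaf identifications together with the fine-resolution isomorphism $H_w^{p,q}(X^\ast,L)\iso H^q(M,\mathcal{H}^p)$.

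First I would identify the kernel sheaves. Away from $|Z|$ the map $\pi$ is biholomorphic and $\dbar_w$-closed forms are holomorphic, so only the stalk at a point $x_0\in|Z|$ is at issue, and there the Puiseux analysis of \prettyref{sec:local} applies. Combining the pullback equivalence \eqref{eq:pullback}, namely the twist by $(1-p-q)(Z-|Z|)$, with $\pi^\ast L\iso L_D$ shows that a $\dbar_w$-closed germ pulls back to a holomorphic section of $L_D$ on the punctured disc with a prescribed $L^2$-growth; the $L^2$-extension theorem (\prettyref{thm:extension}) then produces a holomorphic extension across $x_0$, with pole order bounded by the local degree of $Z-|Z|$ when $p=0$ and with no pole when $p=1$. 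This gives $\mathcal{H}^0\iso\cO(Z-|Z|+D)$ and $\mathcal{H}^1\iso\Omega^1(D)$.

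Next I would verify that $0\abb\mathcal{H}^p\abb\mathcal{L}^{p,0}\overset{\dbar_w}{\abb}\mathcal{L}^{p,1}\abb0$ is exact as a complex of sheaves on $M$. Exactness at $\mathcal{L}^{p,0}$ is the definition of $\mathcal{H}^p$, while exactness at $\mathcal{L}^{p,1}$ is exactly \emph{local} $L^2$-solvability of $\dbar_w$, i.e.\ the vanishing $H_w^{p,1}(A^\ast)=0$ established in \prettyref{sec:local} (applied on a small neighborhood of $x_0$, and immediate away from $|Z|$ because $\dim M=1$). A fine, hence acyclic, resolution then yields $H_w^{p,q}(X^\ast,L)\iso H^q\big(\Gamma(M,\mathcal{L}^{p,\bullet})\big)\iso H^q(M,\mathcal{H}^p)$, which is the first isomorphism in each line of the statement. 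The remaining identities $H^0(M,\Omega^1(D))\iso H^1(M,\cO(-D))$ and $H^1(M,\Omega^1(D))\iso H^0(M,\cO(-D))$ are ordinary Serre duality on the compact Riemann surface $M$ (recall $\Omega^1=K_M$), here read as an equality of dimensions.

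The hard part will be the second step: pinning down the local $L^2$-growth condition so that the extracted pole order is precisely the local multiplicity of $Z-|Z|$ at $x_0$, and hence $\mathcal{H}^0$ is honestly $\cO(Z-|Z|+D)$ rather than a neighboring twist. Once the explicit Puiseux computations of \prettyref{sec:local} are in hand this is essentially bookkeeping, but it is the step at which the characteristic divisor $Z-|Z|$ of the statement genuinely enters, so I would carry the powers of $t$ from \eqref{eq:pullback} through \prettyref{thm:extension} with care.
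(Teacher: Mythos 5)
Your proposal follows essentially the same route as the paper: identify the kernel sheaves via the Puiseux pullback \eqref{eq:pullback} and the $L^2$-extension theorem as $\cO(Z-|Z|+D)$ and $\Omega^1(D)$, resolve them by fine sheaves of locally square-integrable forms using the local solvability from Section \ref{sec:local}, and finish with Serre duality on $M$. The only point to tidy up is that the middle term of your short exact sequence must be the sheaf of germs lying in the domain of $\dbar_w$ (the paper's $\cC^{p,0}_{E,E'}$) rather than all of $\mathcal{L}^{p,0}$, so that $\dbar_w$ is an actual sheaf morphism; this domain sheaf is still a module over the smooth functions on $M$ and hence fine.
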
 
In \cite[\S 5]{Pardon89}, Pardon proved that $H_{(2),\textrm{sm}}^{0,q}(X^\ast)\iso H^q(M,\cO(Z-|Z|)$, where $H_{(2),\textrm{sm}}^{p,q}(X^\ast)$ denotes the $\dbar$-co\-ho\-mol\-o\-gy with respect to smooth $L^2$-forms. We will use similar arguments here.

\begin{proof} Let $x_0$ be in $\Sing X$, and let $A$ be an open neighborhood of $x_0=0$ in $X$ embedded locally in $\CC^n$. 
We assume that $A=A_1\ver\,...\,\ver A_m$ with at $x_0$ irreducible analytic sets $A_i$. 
We obtain resolutions $\pi_i:=\pi|_{\pi^{-1}(A_i)}:M_i\abb A_i$ of $A_i$. The sets $M_i$ are pairwise disjoint in $M$ and, also, the support of the exceptional divisors $Z_i$ of the resolution $\pi_i$. 
We get $Z|_{\pi^{-1}(A)}=\sum_{i=1}^m Z_i$ and $\left.|Z|\right|_{\pi^{-1}(A)}=\sum_{i=1}^m |Z_i|$. 
Therefore, the consideration in the local case (see \eqref{eq:dbarw}) implies that $\dbar_w:L^{p,0}(X^\ast, L)\abb L^{p,1}(X^\ast, L)$ 
can be identified with
	\[\dbar_w:L^{p,0}(M,L_{(1-p)(Z-|Z|)+D})\abb L^{p,1}(M,L_{p(|Z|-Z)+D}).\]
Hence,
	\[H_w^{0,0}(X^\ast, L)\iso\ker(\dbar_w:L^{0,0}(M,L_{Z-|Z|+D})\abb L^{0,1}(M,L_D))\cong H^0(M,\cO(Z-|Z|+D))\]
and 
\[H_w^{1,0}(X^\ast, L)\iso\ker(\dbar_w:L^{1,0}(M,L_D)\abb L^{1,1}(M,L_{|Z|-Z+D})) \cong H^0(M,\Omega^1(D)).\]

Serre duality (see Theorem 2 in \cite[\S~3.10]{Serre55}) implies
	\[H_w^{1,0}(X^\ast,L)\iso H^0(M,\Omega^1(D))\iso H^1(M,\cO(-D)).\]

To prove the other two isomorphisms, consider the following general situation:
Let $E$ be a divisor on $M$, $L_E$ the associated bundle,
and let $\cL^{p,q}_E$ denote the sheaf on $M$ which is defined by $\cL^{p,q}_E(U):=L_\loc^{p,q}(U,L_E)$ for each open set $U\aus M$. 
Let $E'\kl E$ be another divisor. Consider the $\dbar$-operator in the sense of distributions 
$\dbar_w:\cL^{p,0}_{E}\abb\cL^{p,1}_{E'}$. Let $\cC^{p,0}_{E,E'}$ denote the sheaf defined by 
	\[\cC^{p,0}_{E,E'}(U):=\dom \left(\dbar_w:L_\loc^{p,0}(U,L_E)\abb L_\loc^{p,1}(U,L_{E'})\right).\]
Then $\cC_{E,E'}^{p,0}$ is fine and, in particular, $H^1(M, \cC_{E,E'}^{p,0})=0$.
We get the sequence
\begin{equation}\label{eq:shortseq}
0 \rightarrow \Omega^p(E) \rightarrow  \cC^{p,0}_{E,E'}  \overset{\dbar_w}{\longrightarrow} \cL^{p,1}_{E'} \rightarrow 0
\end{equation}
which is exact by the usual Grothendieck-Dolbeault lemma because there is an embedding $\cL^{p,q}_{E'} \subset \cL^{p,q}_E$
(induced by the natural inclusion $\cO(E')\subset \cO(E)$, see \eqref{eq:L2LocInclusion}).

This induces the long exact sequence of cohomology groups
	\[0 \rightarrow \Gamma(M,\Omega^p(E)) \rightarrow \cC^{p,0}_{E,E'}(M) \overset{\dbar_w}{\longrightarrow} \cL^{p,1}_{E'}(M) \rightarrow H^1(M,\Omega^p(E)) \rightarrow  H^1(M,\cC^{p,0}_{E,E'})=0.\] 
Hence, $\cL^{p,1}_{E'}(M)/\dbar_w\cC^{p,0}_{E,E'}(M)\iso H^1(M,\Omega^p(E))$. We conclude
	\[H_w^{0,1}(X^\ast,L)\iso\cL^{0,1}_{D}(M) / \dbar_w \cC^{0,0}_{Z-|Z|+D,D}(M)\iso H^1(M,\cO(Z-|Z|+D))\]
and, using the Serre duality again,
	\[H_w^{1,1}(X^\ast,L)\iso\cL^{1,1}_{|Z|-Z+D}(M) / \dbar_w \cC^{1,0}_{D,|Z|-Z+D}(M)\iso H^1(M,\Omega^1(D))\iso H^0(M,\cO(-D)).\]
\end{proof}

Theorem \ref{thm:RR-w} follows now as a simple corollary by use of the classical Riemann-Roch theorem
for each connected component of the Riemann surface $M$,
keeping in mind that by definition $g(M)=g(X)$, $\deg L=\deg \pi^\ast L=\deg D$ and $\mult'_x X= \sum_{p\in \pi^{-1}(x)} \deg_p(Z-|Z|)$.

\mpar
To deduce also a Riemann-Roch theorem for the $\dbar_s$-cohomology, 
we can use the following $L^2$-version of Serre duality:

\begin{thm} For each $p\in\{0,1\}$, the range of $\dbar_w:L^{p,0}(X^\ast, L)\abb L^{p,1}(X^\ast, L)$ is closed.
In particular, we get
	\[H_w^{p,q}(X^\ast, L)\iso H_s^{1-p,1-q}(X^\ast, L^{-1}).\]
\end{thm}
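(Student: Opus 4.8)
The plan is to extract closed range from the finite-dimensionality of the weak cohomology already supplied by \prettyref{thm:computedCo}, and then to obtain the duality isomorphisms as an instance of $L^2$-Serre duality (\prettyref{lem:duality}) in its global, bundle-valued form. First I would observe that \prettyref{thm:computedCo} identifies each $H_w^{p,q}(X^\ast,L)$ with a coherent-sheaf cohomology group on the compact Riemann surface $M$, so all of these spaces are finite-dimensional. Since $X$ is a curve, the relevant $\dbar$-complex has length one: for fixed $p$ the only differential is $\dbar_w\colon L^{p,0}(X^\ast,L)\abb L^{p,1}(X^\ast,L)$, and
\[H_w^{p,1}(X^\ast,L)=L^{p,1}(X^\ast,L)/\cR(\dbar_w)\]
is its algebraic cokernel. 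Hence $\dbar_w$ is a closed, densely defined operator with finite-dimensional cokernel.

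The functional-analytic heart of the argument is the standard fact that such an operator has closed range: if $T\colon H_1\abb H_2$ is closed and densely defined with $\dim(H_2/\cR(T))=k<\infty$, then $\cR(T)$ is closed. I would prove this by picking $v_1,\dots,v_k\in H_2$ representing a basis of the cokernel and enlarging $T$ to $\tilde T\colon\dom T\ds\CC^k\abb H_2$, $\tilde T(u,c):=Tu+\sum_j c_j v_j$. One checks directly that $\tilde T$ is closed and surjective and that $\ker\tilde T=\ker T\ds\{0\}$, because the $v_j$ are linearly independent modulo $\cR(T)$; the open mapping theorem applied to the bounded surjection $\tilde T$ out of the graph-norm Hilbert space then shows that $\cR(T)=\tilde T(\dom T\ds\{0\})$ is closed. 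Applying this to $\dbar_w$ settles the first assertion for both $p\in\{0,1\}$.

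With closed range established, the isomorphisms are pure $L^2$-Serre duality. The conjugate Hodge-$\ast$ operator is an isometric conjugate-linear isomorphism $\qhodge\colon L^{p,q}(X^\ast,L)\abb L^{1-p,1-q}(X^\ast,L^{-1})$, and the adjoint relations recalled in \prettyref{sec:preliminaries} read $\dbar_w^\ast=\thet_s=-\qhodge\dbar_s\qhodge$, with $\dbar_s$ now acting on $L^{-1}$-valued forms. Closedness of $\cR(\dbar_w)$ gives the orthogonal splitting $L^{p,1}(X^\ast,L)=\cR(\dbar_w)\ds\ker\dbar_w^\ast$, so $H_w^{p,1}(X^\ast,L)\iso\ker\thet_s$; and since $\thet_s=-\qhodge\dbar_s\qhodge$, the isomorphism $\qhodge$ carries $\ker\thet_s$ onto $\ker\dbar_s=H_s^{1-p,0}(X^\ast,L^{-1})$. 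For the complementary degree I would first note that the closed-range theorem transfers closedness of $\cR(\dbar_w)$ on $L$ to closedness of $\cR(\dbar_s)$ on $L^{-1}$, via $\dbar_s=\thet_w^\ast$ and $\thet_w=-\qhodge\dbar_w\qhodge$; the symmetric computation then yields $H_w^{p,0}(X^\ast,L)=\ker\dbar_w\iso H_s^{1-p,1}(X^\ast,L^{-1})$.

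I expect the only genuine obstacle to be the functional-analytic passage from finite cokernel to closed range together with the bookkeeping that the conjugate Hodge star exchanges $L$ with $L^{-1}$; everything else is formal once \prettyref{thm:computedCo} has furnished finite-dimensionality.
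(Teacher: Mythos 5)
Your proposal is correct and follows essentially the same route as the paper: finite-dimensionality of the cokernel from \prettyref{thm:computedCo}, the standard functional-analytic fact that a densely defined closed operator with finite-codimensional range has closed range (which the paper simply cites from \cite{HenkinLeiterer84}, Appendix~2.4, while you sketch its proof), and then $L^2$-Serre duality via the conjugate Hodge star and the adjoint relations, transferring closedness to $\cR(\dbar_s)$ on $L^{-1}$-valued forms. The paper phrases the final isomorphisms through the harmonic spaces $\sH_{w/s}^{p,q}=\ker\dbar\durch\ker\dbar^{\ast}$ rather than your explicit orthogonal splittings, but this is the same argument.
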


\begin{proof} Recall the following well\nbd  known fact. 
If $P:H_1\abb H_2$ is a densely defined closed operator between Hilbert spaces with range $\cR(P)$ of finite codimension,
then the range $\cR(P)$ is closed in $H_2$ (see \eg \cite{HenkinLeiterer84}, Appendix~2.4).

\par
As $M$ is compact, 
\prettyref{thm:computedCo} implies particularly that the range of $\dbar_w$ is finite codimensional and, therefore, closed. 
Since $\dbar_s$ is the adjoint of $-\qhodge \dbar_w\qhodge$, the range of $\dbar_s:L^{1-p,0}(X^\ast, L^{-1})\abb L^{1-p,1}(X^\ast,L^{-1})$ is closed as well. That both ranges are closed implies the $L^2$\nbd duality (\cf \prettyref{lem:duality})
	\[H_w^{p,q}(X^\ast,L)\iso\sH_w^{p,q}(X^\ast,L)\iso\sH_s^{1-p,1-q}(X^\ast,L^{-1})\iso H_s^{1-p,1-q}(X^\ast,L^{-1}),\]
where $\sH_{w/s}^{p,q}(X^\ast,L):=\ker \dbar_{w/s}\durch\ker \dbar_{w/s}^\ast$ denotes the space of $\dbar$-harmonic forms with values in $L$.\end{proof}

Therefore, Theorem \ref{thm:computedCo} yields:
	\begin{equation}\begin{split}\label{eq:computed-s-Co}H_s^{0,0}(X^\ast, L)&\iso H_w^{1,1}(X^\ast,L^{-1}) \iso H^0(M,\cO(D)),\\
	H_s^{0,1}(X^\ast, L) &\iso H_w^{1,0}(X^\ast,L^{-1}) \iso H^1(M,\cO(D)),\\
	H_s^{1,0}(X^\ast, L) &\iso H_w^{0,1}(X^\ast, L^{-1}) \iso H^1(M,\cO(Z-|Z|-D)),\hbox{and} \\
	H_s^{1,1}(X^\ast, L) &\iso H_w^{0,0}(X^\ast,L^{-1}) \iso H^0(M,\cO(Z-|Z|-D)).\end{split}\end{equation}
Haskell computed $H_{\cpt}^{0,q}(X^\ast)\iso H^q(M,\cO_M)$, where $H_{\cpt}^{p,q}(X^\ast)$ denotes the $\dbar$-co\-ho\-mol\-o\-gy
with respect to smooth forms with compact support (see Thm.~3.1 in \cite{Haskell89}).
From \eqref{eq:computed-s-Co}, we obtain the dual version of \prettyref{thm:RR-w}, \ie the Riemann-Roch theorem for the $\dbar_s$-cohomology:

\begin{cor}[$\dbar_s$-Riemann-Roch]\label{thm:RR-s} Let $X$ be a compact complex curve with $m$ irreducible components,
$L\abb X$ be a holomorphic line bundle, and $\pi:M\abb X$ be a resolution of $X$. Then,
	\begin{align*}h_s^{0,0}(X^\ast,L)-h_s^{0,1}(X^\ast,L)&=m-g+\deg L, \hbox{and}\\
	h_s^{1,1}(X^\ast,L)-h_s^{1,0}(X^\ast,L)&=m-g+\deg(Z-|Z|)-\deg L,\end{align*}
where $Z$ is the exceptional divisor of the resolution.\end{cor}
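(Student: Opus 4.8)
The plan is to deduce the corollary directly from the isomorphisms already recorded in \eqref{eq:computed-s-Co}, thereby reducing both identities to the classical Riemann-Roch theorem on the resolution $M$. Recall from \prettyref{sec:resolution} that $\cO(\pi^*L)\cong\cO(D)$, so that $\deg D=\deg\pi^*L=\deg L$ by the very definition of $\deg L$; I will use this degree identity together with additivity of degree throughout.

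First I would record the Euler-characteristic form of classical Riemann-Roch: on a connected compact Riemann surface $N$ of genus $\gamma$, any divisor $E$ on $N$ satisfies $h^0(N,\cO(E))-h^1(N,\cO(E))=\deg E+1-\gamma$. Since $X$ has $m$ irreducible components, its resolution (the normalization) $M$ has exactly $m$ connected components $M_1,\dots,M_m$, of genera $g_1,\dots,g_m$. Summing the Riemann-Roch identity over the components then gives
\[
h^0(M,\cO(E))-h^1(M,\cO(E))=\deg E+m-g,
\]
where $g=\sum_i g_i=g(X)$ by the definition of the genus. This single bookkeeping identity is the only ingredient beyond \eqref{eq:computed-s-Co}.

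For the first equation I would invoke $H_s^{0,0}(X^\ast,L)\iso H^0(M,\cO(D))$ and $H_s^{0,1}(X^\ast,L)\iso H^1(M,\cO(D))$ from \eqref{eq:computed-s-Co}, so that $h_s^{0,0}(X,L)-h_s^{0,1}(X,L)$ equals $h^0(M,\cO(D))-h^1(M,\cO(D))$; substituting $E=D$ into the summed Riemann-Roch identity and using $\deg D=\deg L$ yields $m-g+\deg L$. For the second equation I would instead use $H_s^{1,1}(X^\ast,L)\iso H^0(M,\cO(Z-|Z|-D))$ and $H_s^{1,0}(X^\ast,L)\iso H^1(M,\cO(Z-|Z|-D))$, so the left-hand side is $h^0(M,\cO(Z-|Z|-D))-h^1(M,\cO(Z-|Z|-D))$; taking $E=Z-|Z|-D$ gives $\deg(Z-|Z|-D)+m-g=m-g+\deg(Z-|Z|)-\deg L$, as claimed.

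I do not expect a genuinely hard step here, since all the analytic content has already been packaged into \eqref{eq:computed-s-Co} via \prettyref{thm:computedCo} and the $L^2$-Serre duality theorem. The only points demanding care are bookkeeping: tracking that $M$ is in general disconnected, so that the ``$+1$'' of Riemann-Roch accumulates to ``$+m$'' after summing over components, and checking the degree computations $\deg D=\deg L$ and $\deg(Z-|Z|-D)=\deg(Z-|Z|)-\deg L$. I would also remark in passing that, specializing to the trivial bundle, the first identity recovers Haskell's computation $H_{\cpt}^{0,q}(X^\ast)\iso H^q(M,\cO_M)$ at the level of Euler characteristics, providing a consistency check.
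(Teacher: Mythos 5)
Your proposal is correct and follows exactly the route the paper intends: Corollary \ref{thm:RR-s} is stated immediately after \eqref{eq:computed-s-Co} as a direct consequence of those isomorphisms together with the classical Riemann--Roch theorem applied to each of the $m$ connected components of the normalization $M$, with the degree bookkeeping $\deg D=\deg L$ and $\deg(Z-|Z|-D)=\deg(Z-|Z|)-\deg L$. There is nothing to add.
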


In \cite{BrueningPeyerimhoffSchroeder90}, Br\"uning, Peyerimhoff and Schr\"oder proved that
$h_{s}^{0,0}(X^\ast)-h_{s}^{0,1}(X^\ast)=m-g$  and $h_{w}^{0,0}(X^\ast)-h_{w}^{0,1}(X^\ast)=m-g+\deg Z-|Z|$ by computing the indices of 
the differential operators $\dbar_{s}$ and $\dbar_w$.
Schr\"oder generalized this result for vector bundles in \cite{Schroeder89}.

\bpar
\section{Weakly holomorphic functions}\label{sec:weakly}

In this section, we will prove \prettyref{thm:main2} by studying weakly holomorphic functions
and a localized version of the $\dbar_s$-operator.

\par
Recalling the arguments at the beginning of \prettyref{sec:main}, 
it is easy to see that the results of \prettyref{sec:local} generalize to arbitrary complex curves. In particular, the $\dq_{s,w}$-holomorphic functions on a singular complex curve are precisely the square-integrable weakly holomorphic functions (\cf \prettyref{lem:sholomorphicB}), and the $\dq_{s,w}$-equation is locally solvable in the $L^2$-sense (combine \prettyref{lem:sholomorphicA} and \prettyref{lem:sholomorphicC}).

\par
Let $X$ be a singular complex curve, $\cL_X^{p,q}$ the sheaf of locally square-integrable forms, and let $\dbar_w:\cL_X^{p,0}\abb\cL_X^{p,1}$ be the $\dbar$-operator in the sense of distributions. For each open set $U\aus X$, we define $\dbar_{s,\loc}$ on $L^{p,0}_\loc(U)$ by $f\in\dom \dbar_{s,\loc}$ iff $f\in\dom\dbar_w$ and $f\in\dom \left(\dbar_{s,w}:L^{p,0}(V)\abb L^{p,1}(V)\right)$ for all $V\Subset U$ (for more details, see \cite[Sect.~6]{Ruppenthal14}).
Let $\cF_X^{p,0}$ be the sheaf of germs defined by $\cF_X^{p,0}(U):=\dom \left(\dbar_{s,\loc}:L_\loc^{p,0}(U)\abb L_\loc^{p,1}(U)\right)$, and let $\wcO_X$ denote the sheaf of germs of weakly holomorphic functions on $X$.
Then our considerations above yield an exact sequence
\begin{eqnarray}\label{eq:wh}
0 \rightarrow \widehat\cO_X = \ker\dq_{s,\loc} \hookrightarrow \cF^{0,0}_X \overset{\dbar_{s,\loc}}{\longrightarrow}\cL^{0,1}_X \rightarrow 0.
\end{eqnarray}
The sheaves $\cF_X^{0,0}$ and $\cL^{0,1}_X$ are fine and so \eqref{eq:wh} is a fine resolution of $\widehat\cO_X$.
Let $H^{p,q}_{s,\loc}(X^*)$ denote the $L^{2}_\loc$-Dolbeault cohomology on $X^*$ with respect to the $\dq_{s,\loc}$-operator.
Using $\widehat \cO_X=\pi_\ast\cO_M$, we deduce from \eqref{eq:wh}:
\begin{eqnarray*}
H^0(M,\cO_M)\iso H^0(X,\widehat\cO_X) &\!= H^{0,0}_{s,\loc}(X^*),\\
H^1(M,\cO_M)\iso H^1(X,\widehat\cO_X) &\!\cong H^{0,1}_{s,\loc}(X^*),
\end{eqnarray*}
where $\pi:M\abb X$ is a resolution of $X$. That proves \prettyref{thm:main2}.

\bpar
\section{Applications}\label{sec:application}

There are many applications of the classical Riemann-Roch theorem; we will transfer two of them to our situation
to exemplify how the $L^2$-Riemann-Roch theorem can substitute the classical one on singular spaces.

\subsection{Compact complex curves as covering spaces of \texorpdfstring{$\CC\PP^1$}{CP\textasciicircum 1}}\label{sec:cover}

Let $X$ be a compact irreducible complex curve with $\Sing X=\{x_1,...,x_k\}$, 
let ${(h_i)}_{x_i}\in\cO_{x_i}$ be chosen such that ${(h_i)}_{x_i}\widehat\cO_{x_i}\aus \cO_{x_i}$ 
and let $U_i\aus X$ be a (Stein) neighborhood of $x_i$ with $h_i\cdot \widehat\cO(U_i)\aus \cO (U_i)$
(for the existence of the $h_i$, see \eg Thm.~6 and its Cor.~in \cite[\S\,III.2]{Narasimhan66}). 
Choose an $x_0\in X^\ast$ and a (Stein) neighborhood $U_0$ of $x_0$. We can assume that
$U_0$, ..., $U_k$ are pairwise disjoint.

\medskip
We define a line bundle $L\rightarrow X$ as follows.
Let $U_{k+1}=X^*\minus\{x_0\}$ and choose $f_0\in \cO(U_0)$ such that $f_0$ is vanishing to the order $r:=\ord_{x_0}f_0 \geq 1$ in $x_0$,
which we will determine later, but has no other zeros. 
We also set $f_i:=1/h_i$ for $i=1, ..., k$ and $f_{k+1}= 1$,
and consider the Cartier divisor $\{(U_i, f_i)\}_{i=0, ..., k+1}$ on $X$.
Let $L\rightarrow X$ be the line bundle associated to this divisor.
As the $f_i$ have no zeros for $i>0$, there exists a non-negative integer $\delta$
such that $\deg L=r-\delta$. Now choose $r:=g(X)+\delta+1$. It follows that $\deg L=g(X)+1$.
Give $L$ an arbitrary smooth Hermitian metric.

There is a canonical way to identify holomorphic sections of $L$ with meromorphic functions on $X$.
A holomorphic section $s\in\cO(L)$ is represented by a tuple $\{s_i\}_i$ where $s_j/f_j = s_l/f_l$ on $U_j\cap U_l$.
This gives a meromorphic function $\Psi(s)$ by setting $\Psi(s):=s_j/f_j$ on $U_j$.
Note that $\Psi(s)$ has zeros in the singular points $x_1, ..., x_k$ and may have a pole of order $r$ at $x_0\notin \Sing X$.

\medskip
We can now apply our $L^2$-Riemann-Roch theorem. The $\dbar_s$\nbd Riemann-Roch theorem, Corollary \ref{thm:RR-s}, implies $\dim H_s^{0,0}(X^\ast, L)\gr 1-g(X)+\deg L= 2$. Therefore, there is a section $\tau\in L^{0,0}(X^*,L)$ with $\dbar_s \tau=0$ and $\tau_{k+1}$ is non-constant, where $\tau=\{\tau_i\}_{i=0,..,k+1}$ is written in the trivialization as above. This means that $\tau_i\in L^{0,0}(X^*\cap U_i)$, $\dbar_s\tau_i=0$, and $\tau_j/f_j=\tau_{k+1}$ is non-constant on $U_j\cap U_{k+1}$. \prettyref{thm:main2} implies that $\tau_i\in\widehat\cO(U_i)$, $i=1, ..., k+1$. Now consider $\Psi(\tau)$ as defined above, \ie $\Psi(\tau)=\tau_i/f_i$ on $U_i$. We conclude that $\Psi(\tau)/h_i \in \widehat \cO(U_i)$, thus $\Psi(\tau)\in\cO(U_i)$ for $i=1, ..., k$. Moreover, $\Psi(\tau)$ is non-constant, so it cannot be holomorphic on the whole compact space $X$, thus must have a pole of some order $\leq r$ in $x_0$. Thus:
	\begin{align*}\Psi(\tau)&:X\minus \{x_0\}\abb \CC, \hbox{and}\\
	\widetilde \Psi(\tau)&:X\abb \CC\PP^1, x \mapsto \begin{cases}[\Psi(\tau)(x):1], & x\ungl x_0\\ [1:\frac 1{\Psi(\tau)(x)}], &x\in U_0\end{cases}\end{align*}
are finite, open and, hence, analytic ramified coverings (Covering Lemma, see \cite[Sect.\,VII.2.2]{GrauertRemmertCAS}). In particular, $X\minus \{x_0\}$ is Stein (use \eg Thm.~1 in \cite[\S\,V.1]{GrauertRemmertTSS}).

\mpar
\subsection{Projectivity of compact complex curves}
\label{sec:projectivity}

A line bundle $L\abb X$ on a compact complex space is called very ample if its global holomorphic sections induce a holomorphic embedding into the projective space $\CP^N$, \ie if $s_0, ..., s_N$ is a basis of the space of holomorphic sections  $\Gamma(X,\cO(L))$, then the map
	\begin{eqnarray}\label{eq:Phi} \Phi: X\rightarrow \CC\PP^N\ ,\ x\mapsto [s_0(x):...:s_N(x)] , \end{eqnarray} 
given in local trivializations of the $s_i$, defines a holomorphic embedding of $X$ in $\CC\PP^N$.
If some positive power of the line bundle has this property, then we say that it is ample. 
A compact complex space is called projective if there is an ample (and, hence, a very ample) line bundle on it.  

A classical application of the Riemann-Roch theorem is that any compact Riemann surface is projective,
and a line bundle on a Riemann surface is ample if its degree is positive (\cf \eg \cite[Sect.~10]{Narasimhan92}).
This generalizes to singular complex curves:

\begin{thm}\label{thm:proj} 
Let $X$ be a compact locally irreducible complex curve. 
If $L\abb X$ is a holomorphic line bundle with $\deg L\gg 0$, then $L$ is very ample. 
In particular, $X$ is projective and each holomorphic line bundle on $X$ is ample if its degree is positive.
\end{thm}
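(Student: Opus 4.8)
The plan is to deduce very ampleness for $\deg L\gg0$ from a Riemann--Roch count and then obtain the remaining assertions formally. For the last two statements, note first that once every line bundle of sufficiently large degree is known to be very ample, the space $X$ is projective (such bundles exist, \eg those associated to a large effective divisor supported in $X^\ast$); and if $\deg L>0$ then $\deg L^{\otimes k}=k\deg L\abb\infty$, so $L^{\otimes k}$ is very ample for $k\gg0$ and hence $L$ is ample. Thus everything reduces to the implication $\deg L\gg0\Rightarrow L$ very ample, and since $X$ is locally irreducible its normalization $\pi:M\abb X$ is bijective; we may assume $X$ connected (the disconnected case follows by embedding the components disjointly).

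Next I would invoke the standard criterion: the map $\Phi$ of \eqref{eq:Phi} is a closed embedding precisely when its global sections (i) \emph{separate points}, \ie for $x\ne y$ some $s\in H^{0}(X,\cO(L))$ has $s(x)=0\ne s(y)$, and (ii) \emph{generate the first-order structure}, \ie for every $x$ the restriction $H^{0}(X,\cO(L))\abb \big(\cO(L)/\am_x^{2}\cO(L)\big)_x$ is onto (at a singular point this forces the sections to generate the maximal ideal $\am_x$, \ie to realize the full embedding dimension). Since $\pi$ is bijective, (i) is exactly the injectivity of $\Phi$. Both (i) and (ii) follow once
\[ H^{1}\big(X,\cO(L)\otimes\cJ\big)=0 \quad\text{for } \cJ\in\{\am_x\am_y,\ \am_x^{2}\}, \]
uniformly in $x,y\in X$; this is the crux of the argument.

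To prove this vanishing I would transfer it to the compact Riemann surface $M$, where it is classical. Write $\pi^\ast L\iso L_{D}$ as in \prettyref{thm:computedCo}. Because $\pi$ is finite, its higher direct images vanish, so $H^{1}(X,\pi_\ast\cG)\iso H^{1}(M,\cG)$ for coherent $\cG$ on $M$. For each $\cJ$ above choose an effective divisor $E$ on $M$, supported on $\pi^{-1}(\{x,y\}\cup\Sing X)$ and of degree bounded independently of $x,y$ (using the conductor to absorb the singular points), with $\pi_\ast\big(\cO(D-E)\big)=\cO(L)\otimes\pi_\ast\cO_M(-E)\aus\cO(L)\otimes\cJ$ and torsion cokernel; the associated long exact sequence then exhibits $H^{1}(X,\cO(L)\otimes\cJ)$ as a quotient of $H^{1}(M,\cO(D-E))$, which vanishes by Serre duality on $M$ as soon as $\deg L-\deg E>2g-2$. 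As $\deg E$ is uniformly bounded, a single choice $\deg L\gg0$ handles all $x,y$ at once. Here our $L^2$-theory enters through the section counts: by \eqref{eq:computed-s-Co} one has $h_s^{0,0}(X^\ast,L)=\dim H^{0}(M,\cO(D))$, and Corollary~\ref{thm:RR-s} evaluates this count, while the normalization sequence $0\abb\cO(L)\abb\pi_\ast\cO(\pi^\ast L)\abb\mathcal{Q}\otimes L\abb0$ (with $\mathcal{Q}=\wcO_X/\cO_X$ supported on $\Sing X$) passes from these sections to the honest holomorphic sections defining $\Phi$.

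The main obstacle I expect is precisely this last passage at the singular points. The $L^2$-machinery, and equally Corollary~\ref{thm:RR-s}, naturally computes cohomology of the \emph{normalization}, \ie it sees weakly holomorphic sections (\cf \prettyref{thm:main2}); but $\Phi$ must be built from genuine sections of $\cO_X$, and weakly holomorphic sections merely generating $\cO_M$ would only embed the smooth curve $M$. Controlling the gap $\wcO_X/\cO_X$ is exactly what the conductor does---just as the functions $h_i$ with $h_i\wcO\aus\cO$ were used in \prettyref{sec:cover}---and the quantitative point is that $\deg L$ must be large enough to dominate this conductor contribution \emph{uniformly} over the compact set of pairs $(x,y)$, which is where compactness of $X$ and the Riemann--Roch estimate on $M$ come together.
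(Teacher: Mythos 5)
Your proposal is correct, but it follows a genuinely different route from the paper's. You reduce very ampleness to the cohomological criterion $H^1(X,\cO(L)\otimes\cJ)=0$ for $\cJ\in\{\am_x\am_y,\am_x^2\}$, push the ideal sheaves up to the normalization $M$ via the conductor (an effective divisor $E$ with $\pi_*\cO_M(-E)\aus\cJ$ and skyscraper cokernel, $\deg E$ uniformly bounded since $\Sing X$ is finite), and conclude from $R^1\pi_*=0$ and the classical vanishing $H^1(M,\cO(D-E))=0$ for $\deg L-\deg E>2g-2$. This is essentially the ``well-known sheaf-theoretical'' proof that the paper explicitly sets aside: the $L^2$-Riemann--Roch never really intervenes in your argument (the section counts you cite are decoration), so the proof does not serve the section's stated purpose of showing how the $L^2$-theory can substitute for coherent cohomology on the singular space. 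The paper instead works entirely inside $H_w^{0,0}(X^\ast,L)$: it uses the non-surjectivity of the natural inclusions $T$ (\prettyref{lem:tool}, itself a consequence of the $\dbar_w$-Riemann--Roch and \prettyref{thm:vanishing}) to produce weakly holomorphic sections with prescribed vanishing orders, and then kills Taylor coefficients recursively up to the conductor order $\eta_j=\ord_{x_j}h_j$ to force the sections to be genuinely holomorphic and to behave like $1$ (resp.\ like the tangent-cone projection $z$) at each singular point. What your approach buys is uniformity and a cleaner treatment of the first-order condition at singular points --- your criterion, surjectivity of $H^0(X,\cO(L))\abb(\cO(L)/\am_x^2\cO(L))_x$, correctly captures the full embedding dimension, and is in fact sharper than the paper's condition (iii) on the differential; the one computation you should write out is the inclusion $\pi_*\cO_M(-E)\aus\am_x^2$, which follows from $g\cdot\mathfrak{c}_x\aus\am_x^2$ for $g\in\am_x$ of minimal order and the conductor ideal $\mathfrak{c}_x$. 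What the paper's approach buys is an explicit construction of the embedding sections and a demonstration that the $L^2$-invariants alone (dimension drops under $T$) suffice to run the argument.
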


Clearly, this result is well-known and follows from more general sheaf-theoretical methods (vanishing theorems)
once one knows that $L$ is positive iff $\deg L >0$
(\cf \eg Thm.~4.4 in \cite[Sect.~V.4.3]{Peternell94} or Satz 2 in \cite[\S\,3]{Grauert62}).
Nevertheless, it seems interesting to us to present another proof of Theorem \ref{thm:proj} which is based on 
the $L^2$-Riemann-Roch of singular complex curves.
The assumption that $X$ must be locally irreducible in \prettyref{thm:proj} is not necessary. 
One can prove the result without this assumption easily by the same technique.
Yet, to keep the notation simple, we present here only the locally irreducible case.

Let us make some preparations for the proof of Theorem \ref{thm:proj}.
Let $X$ be a connected complex curve and $\pi:M\abb X$ a resolution of $X$. We choose a point $x_0\in\Sing X$ and a small neighborhood $U\aus X$ of $x_0$ with $U^\ast:=U\minus\{x_0\}\aus \Reg X$. Assume $X$ is irreducible at $x_0$. We define $p_0:=\pi^{-1}(x_0)$, $V:=\pi^{-1}(U)$, and $V^\ast:=V\minus\{p_0\}$. We can assume that there is a chart $t:V\abb\CC$ such that the image of $t$ is bounded.

\par
The Riemann extension theorem implies that $\pi^{-1}:U\abb V$ is weakly holomorphic or, briefly, $\tau:=t\nach \pi^{-1}\in\wcO(U)$ (see \prettyref{thm:weak-Riemann-extension}).
We show that $\tau$ generates the weakly holomorphic functions at $x_0$ in the following sense:
Let $f\in\wcO(U)$. 
Then $f\nach\pi$ is holomorphic on $V^\ast$ and bounded in $p_0$. This implies that $f\nach\pi$ is holomorphic on $V$,
$f\nach\pi(t)=\sum_{\iota=0}^\infty a_\iota t^\iota$, and $f(x)=\sum a_\iota \tau(x)^\iota$ (by shrinking $U$ and $V$ if necessary). 
This allows to define the order $\ord_{x_0} f$ of vanishing of $f$ in $x_0$  by $r\in\NN_0$ if $a_r\ungl 0$ and $a_\iota=0$ for $\iota<r$. In particular, 
$$\ord_{x_0} f=\ord_{p_0} (f\nach\pi).$$
Note that this definition does not depend on the resolution as different resolutions are biholomorphically equivalent.

\mpar
The $L^2$\nbd extension theorem (see \prettyref{thm:extension}) and \eqref{eq:dbarw} imply
	\begin{equation}\label{eq:application-dbarw-zz}f\in H_w^{0,0}(U) \genau t^{r_0}\cdot \pi^\ast f\in\cO_{L^2}(V)
	\genau(\tau^{r_0}\cdot f\in\wcO(U)\mbox{ and } f\in L^2(U)),
	\end{equation}
where $Z$ denotes the exceptional divisor of the resolution and $r_0:=\deg_{p_0}(Z-|Z|)$. 
In particular, we get the representation $f(x)=\sum_{\iota\gr -r_0} a_\iota \tau(x)^\iota$ and $\ord_{x_0} f:= \ord_{p_0} \pi^*f \gr -r_0$ 
is again well\nbd defined. $f$ is weakly holomorphic iff $\ord_{x_0}f\gr 0$.

\mpar
We denote by $L_{x_0}$ the holomorphic line bundle on $X$ which is trivial on $X\minus\{x_0\}$ and is given by $\tau$ on $U$, \ie the line bundle on $X$ given by the open covering $U_1:=X\minus\{x_0\}, U_0:=U$ and the transition function $g_{01}:=\tau:U_0\durch U_1\abb\CC$. Then 
$\pi^\ast L_{x_0} \cong L_{p_0}$, where $L_{p_0}$ is the holomorphic line bundle $L_{p_0} \rightarrow M$ associated to the divisor $\{p_0\}$.

\par
Let $L\abb X$ be any holomorphic line bundle, $L':=L\tensor L_{x_0}^{-1}$,
and let $s'$ be a section in $H_w^{0,0}(X^\ast, L')$. 
We can assume that $L$ and $L'$ are given by divisors $\{(U_j, f_j)\}$ and $\{(U_j, f'_j)\}$, respectively, where $\{U_j\}$ is an open covering of $X$ with $U_0=U$ and $x_0\notin U_j$ for $j\ungl 0$ and where $f_j, f'_j\in \cM(U_j)$ with $g_{jk}:=f_j/f_k$ and $g'_{jk}:=f'_j/f'_k$ in $\cO(U_{j}\durch U_k)$ ($g_{j,k}$ and $g'_{jk}$ are the transition functions of $L$ and $L'$, respectively). 

We get $f_0=f'_0\cdot \tau$ and $f_j=f'_j$ for $j\ungl 0$. There is a meromorphic function $\tilde s:=\Psi(s')\in\cM(X)$ representing $s'$. 
This meromorphic function is defined by $\tilde s=s'_j/ f'_j$ on $U_j$, where $s'_j$ is the trivialization of $s'$ on $U_j$. We can define a section $s=\{s_j\}\in H_w^{0,0}(X^\ast,L)$ by $s_j=\tilde s \cdot f_j$. Thus $s_0=s'_0\cdot \tau$ and $s_j=s'_j$ for $j\ungl 0$. Hence, $\ord_{x_0} s_0=\ord_{x_0} s'_0+1$.
Summarizing, we get an injective linear map 
$$T:H_w^{0,0}(X^\ast,L\tensor L_{x_0}^{-1})\abb H_w^{0,0}(X^\ast,L),\ s'\mapsto s,$$
which we call the \newterm{natural inclusion}.
It follows from the construction above and by use of \eqref{eq:application-dbarw-zz}
that each section $s\in H_w^{0,0}(X^\ast,L)$ with $\ord_{x_0} s_0>-r_0$ is in the image of $T$.

\mpar
As $H^1(M,\cO(D'))=0$ for a divisor $D'$ with $\deg D'>2g-2$ by the classical Riemann-Roch theorem
(\cf \eg \cite[Sect.~10]{Narasimhan92}), \prettyref{thm:computedCo} (more precisely, $H_w^{0,1}(X^\ast, L)\iso H^1(M,\cO(Z-|Z|+D))$) implies the following vanishing theorem.

\begin{thm}\label{thm:vanishing} If $L\abb X$is a holomorphic line bundle on an irreducible compact complex curve $X$ with $\deg L>2g-2-\sum_{x\in\Sing X} \mult_x' X$, then $H_{w}^{0,1}(X^\ast, L)=0$.\end{thm}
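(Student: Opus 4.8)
The plan is to obtain Theorem~\ref{thm:vanishing} as a direct corollary of \prettyref{thm:computedCo} together with the classical vanishing theorem on the resolving Riemann surface. First I would fix a resolution $\pi:M\abb X$ and a divisor $D$ on $M$ with $\pi^\ast L\iso L_D$, so that $\deg D=\deg L$. Since $X$ is irreducible, its resolution $M$ is a connected compact Riemann surface of genus $g=g(X)$, and \prettyref{thm:computedCo} supplies the identification $H_w^{0,1}(X^\ast,L)\iso H^1(M,\cO(Z-|Z|+D))$. It therefore suffices to show that the right-hand group vanishes under the stated degree hypothesis.

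Second I would carry out the degree bookkeeping for the divisor $D':=Z-|Z|+D$ on the connected surface $M$. Here $\deg D'=\deg(Z-|Z|)+\deg D=\deg(Z-|Z|)+\deg L$. The relation recorded right after \prettyref{thm:RR-w}, namely $\mult_x' X=\sum_{p\in\pi^{-1}(x)}\deg_p(Z-|Z|)$, summed over the finitely many singular points, yields $\deg(Z-|Z|)=\sum_{x\in\Sing X}\mult_x' X$. Consequently $\deg D'=\deg L+\sum_{x\in\Sing X}\mult_x' X$, and the hypothesis $\deg L>2g-2-\sum_{x\in\Sing X}\mult_x' X$ is exactly equivalent to $\deg D'>2g-2$.

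Finally I would invoke the classical vanishing statement on $M$: if $\deg D'>2g-2$, then $H^1(M,\cO(D'))=0$. This is Serre duality $H^1(M,\cO(D'))\iso H^0(M,\Omega^1(-D'))^\ast$ combined with the fact that $\Omega^1(-D')$ has degree $2g-2-\deg D'<0$ and hence admits no nonzero global holomorphic section on the connected compact Riemann surface $M$. Feeding this back into the isomorphism of the first paragraph gives $H_w^{0,1}(X^\ast,L)=0$, as claimed.

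I do not expect a genuine obstacle: the statement is a bookkeeping corollary of the substantial work already carried out in \prettyref{thm:computedCo}. The only points demanding a little care are (i) that irreducibility of $X$ guarantees $M$ is connected, so that the single-surface Riemann--Roch and Serre duality apply verbatim, and (ii) the correct translation of $\deg(Z-|Z|)$ into $\sum_{x\in\Sing X}\mult_x' X$, which is precisely the relation already isolated after \prettyref{thm:RR-w}.
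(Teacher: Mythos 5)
Your proposal is correct and is essentially the paper's own argument: the paper derives Theorem~\ref{thm:vanishing} exactly by combining the isomorphism $H_w^{0,1}(X^\ast,L)\iso H^1(M,\cO(Z-|Z|+D))$ from Theorem~\ref{thm:computedCo} with the classical vanishing $H^1(M,\cO(D'))=0$ for $\deg D'>2g-2$, using $\deg(Z-|Z|)=\sum_{x\in\Sing X}\mult_x' X$. Your extra care about connectedness of $M$ and the Serre-duality justification of the classical vanishing are fine but add nothing beyond the paper's one-line proof.
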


As a preparation for the proof of Theorem \ref{thm:proj}, we get our main ingredient:

\begin{lem}\label{lem:tool}
Let $L\abb X$ be a holomorphic line bundle on a connected compact locally irreducible complex curve $X$ with $\deg L>2g-1-\sum_{x\in\Sing X} \mult_x' X$. 
Then the natural inclusion
$$T:H_w^{0,0}(X^\ast,L\tensor L_{x_0}^{-1})\abb H_w^{0,0}(X^\ast,L)$$ 
is not surjective. 
If $\deg L>2g+r_0-1-\sum_{x\in\Sing X} \mult_x' X$, then there is a section $s\in H_w^{0,0}(X^\ast,L)$ which is weakly holomorphic on $U(x_0)$
and does not vanish in $x_0$.
\end{lem}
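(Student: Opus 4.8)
The plan is to derive both assertions from the $\dbar_w$\nbd Riemann-Roch formula \eqref{eq:RR-w1} together with the vanishing \prettyref{thm:vanishing}. Since $X$ is connected and locally irreducible, its normalization $\pi\colon M\abb X$ is bijective, hence $M$ is a single connected Riemann surface, $X$ is irreducible, and $m=1$ in \eqref{eq:RR-w1}; write $\sigma:=\sum_{x\in\Sing X}\mult_x' X$. The preliminary observation I would record is that as soon as a line bundle $L'\abb X$ satisfies $\deg L'>2g-2-\sigma$, \prettyref{thm:vanishing} forces $h_w^{0,1}(X^\ast,L')=0$, so that \eqref{eq:RR-w1} collapses to the clean formula $h_w^{0,0}(X^\ast,L')=1-g+\deg L'+\sigma$.

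For the first statement, I would first note $\deg L_{x_0}=\deg L_{p_0}=1$ (since $\pi^\ast L_{x_0}\iso L_{p_0}$ and $L_{p_0}$ is associated to the divisor $\{p_0\}$), whence $\deg(L\tensor L_{x_0}^{-1})=\deg L-1$. The hypothesis $\deg L>2g-1-\sigma$ is precisely $\deg L-1>2g-2-\sigma$, so both $L$ and $L\tensor L_{x_0}^{-1}$ meet the vanishing condition. Applying the collapsed formula to each gives $h_w^{0,0}(X^\ast,L\tensor L_{x_0}^{-1})=h_w^{0,0}(X^\ast,L)-1$. As $T$ is injective, its image then has codimension exactly one in $H_w^{0,0}(X^\ast,L)$, so $T$ is not surjective.

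For the second statement, I would apply the first statement to $\tilde L:=L\tensor L_{x_0}^{-r_0}$, whose degree is $\deg L-r_0$; here the hypothesis $\deg L>2g+r_0-1-\sigma$ says exactly $\deg\tilde L>2g-1-\sigma$, so the associated natural inclusion $T\colon H_w^{0,0}(X^\ast,\tilde L\tensor L_{x_0}^{-1})\abb H_w^{0,0}(X^\ast,\tilde L)$ is not surjective. By the characterization recalled just before the lemma (the image of $T$ consists of those sections whose $U$\nbd trivialization has order $>-r_0$, while by \eqref{eq:application-dbarw-zz} every section has order $\gr-r_0$), any section $\tilde s$ outside the image satisfies $\ord_{x_0}\tilde s_0=-r_0$. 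Pushing $\tilde s$ forward to $H_w^{0,0}(X^\ast,L)$ through the $r_0$\nbd fold composite of natural inclusions multiplies the $U$\nbd trivialization by $\tau$ at each step, raising its order by one each time, and produces a section $s$ with $\ord_{x_0}s_0=-r_0+r_0=0$. Such an $s$ is weakly holomorphic on $U(x_0)$ (order $\gr0$) and does not vanish at $x_0$ (order $=0$), which is the desired conclusion.

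Routine reductions aside, the only delicate point is the bookkeeping of the order filtration: one must check that $\deg L_{x_0}=1$, that each natural inclusion raises $\ord_{x_0}(\cdot)_0$ by exactly one, and — crucially — that a section lying outside the image of $T$ has order exactly $-r_0$ rather than merely $>-r_0$. The arithmetic that makes the degree hypotheses line up so that both bundles in every comparison clear the threshold $2g-2-\sigma$, forcing the Riemann-Roch difference to equal precisely one, is what drives the whole argument.
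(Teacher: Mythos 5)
Your proposal is correct and follows essentially the same route as the paper: part one is the identical Riemann--Roch-plus-vanishing dimension count (using $\sum_x \mult_x'X=\deg(Z-|Z|)$), and part two iterates the natural inclusion $r_0$ times, picking a section outside the image whose order at $x_0$ is therefore exactly $-r_0$ and pushing it up to order $0$. The one point you make more explicit than the paper --- that a section outside the image of $T$ has order exactly $-r_0$, by combining the lower bound from \eqref{eq:application-dbarw-zz} with the characterization of the image --- is a welcome clarification rather than a deviation.
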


Recall that $r_0=\mult_{x_0} X -1 = \deg_{p_0} (Z-|Z|)$.

\begin{proof} 
i) As $\pi^\ast(L\tensor L_{x_0}^{-1})\cong \pi^\ast L\tensor L_{p_0}^{-1}$,
we get $\deg L\tensor L_{x_0}^{-1}=\deg L-1>2g-2 -\deg (Z\!-\!|Z|)$. 
The $\dbar_w$\nbd Riemann-Roch theorem and $h_w^{0,1}(X^\ast,L)=0=h_w^{0,1}(X^\ast,L\tensor L_{x_0}^{-1})$ (using \prettyref{thm:vanishing})
yield
	\begin{equation*}\begin{split}h_w^{0,0}(X^\ast,L\tensor L_{x_0}^{-1})&=1\!-\!g+\deg (Z\!-\!|Z|)+\deg L\tensor L_{x_0}^{-1}\\
	&<1\!-\!g+\deg (Z\!-\!|Z|)+\deg L=h_w^{0,0}(X^\ast,L).\end{split}\end{equation*}
Therefore, the natural inclusion $T$ cannot be surjective.

\mpar
ii) The image of $T^{r_0}:H_w^{0,0}(X^\ast,L\tensor L_{x_0}^{-r_0})\abb H_w^{0,0}(X^\ast,L)$ are the sections $s$ with $\ord_{x_0}s_0\gr0$, where $s_0$ is the trivialization of $s$ over $U(x_0)$, \ie the sections where $s_0$ is weakly holomorphic on $U(x_0)$.
As $H_w^{0,0}(X^\ast,L\tensor L_{x_0}^{-r_0-1})\abb H_w^{0,0}(X^\ast,L\tensor L_{x_0}^{-r_0})$ is not surjective 
(use $\deg L\tensor L_{x_0}^{-r_0}=\deg L-r_0>2g-1-\deg (Z\!-\!|Z|)$ and part (i)), 
there is a section $s'\in H_w^{0,0}(X^\ast,L\tensor L_{x_0}^{-r_0})$ with $\ord_{x_0} s'_0=-r_0$ and $\ord_{x_0}(T^{r_0}(s'))_0=0$.
So, $s:=T^{r_0}(s')$ is the section of $H_w^{0,0}(X^\ast,L)$ we were looking for.
\end{proof}

\medskip
\begin{proofX}{of \prettyref{thm:proj}} 
Let $X$ be a connected compact locally irreducible complex curve with $\Sing X=\{x_1,...,x_k\}$, 
and $L\abb X$ a line bundle with $\deg L\gg 0$. 
Following the classical arguments to show that the map $\Phi$ in \eqref{eq:Phi} is a well-defined
holomorphic embedding (see \eg \cite[V.4, Thm.~4.4]{Peternell94}), we have to prove:
\begin{itemize}
\item[(i)] $\Phi$ is well-defined: For $x\in X$, there exists $s\in \Gamma(X,\cO(L))$  such that $s(x)\neq 0$.
\item[(ii)] $\Phi$ is injective: For $x,y\in X$, $x\neq y$, there exists $s\in \Gamma(X,\cO(L))$ such that $s(x)\neq 0$ and $s(y)=0$.
\item[(iii)] $\Phi$ is an immersion: For $x\in X$, the differential $T_x\Phi$ is injective.
\end{itemize}
Since (obviously) $\Phi$ is closed, (ii) and (iii) imply that $\Phi$ is an embedding (see \eg Sect.~1.2.7 in \cite{GrauertRemmertCAS}).

\smallskip
We will prove the statements (i) and (iii) for singular points $x\in \Sing X$.
The case of regular points is simpler and follows easily with the natural inclusion and \prettyref{lem:tool}.
The statement (ii) can be seen just as (i) by imposing the additional condition that $s(y)=0$ in what
we do to prove the statement (i).

\smallskip
Let $\pi:M\abb X$ be a resolution of singularities.
Set $X^\ast=\Reg X$, $M^\ast=\pi^{-1}(X^\ast)$, $p_j:=\pi^{-1}(x_j)$, and $r_j:=\deg_{p_j} (Z-|Z|)$, 
where $Z$ is the unreduced exceptional divisor of the resolution. 
Fix a $\mu\in\{1,...,k\}$ and choose a neighborhood $U_\mu$ of $x_\mu$ such that 
there exist a resolution of the singularities $\pi: V_\mu\abb U_\mu$ and a chart $t:V_\mu\abb \CC$ with $t\nach\pi^{-1}(x_\mu)=0$, and set $\tau:=t\nach \pi^{-1}$.

\smallskip
For each singularity $x_j$, we can choose a function $h_j\in\cO(U_j)$ such that $h_j\cdot \wcO(U_j)\aus\cO(U_j)$ 
for a neighborhood $U_j$ of $x_j$ small enough (see \cite[\S\,III.2]{Narasimhan66}). The number
	\[\eta_j:=\ord_{x_j} h_j\]
is important for our considerations because of the following fact. 
If $f$ is a function on $U_j$ with $\ord_{x_j} f\gr \eta_j$, then $f/h_j$ is bounded at $x_j$ ($\ord_{x_j}f/h_j\gr 0$); this implies $f/h_j\in\wcO(U_j)$ and, hence, $f\in\cO(U_j)$.
For the maximal ideal in $\cO_{X,x_j}$, we get $\am_{x_j} = \{f \in\cO_{X,x_j}\colon \ord_{x_j} f>0\}$ and $\{f \in\cO_{X,x_j}\colon \ord_{x_j} f\gr 2\eta_j\} \aus \am^2_{x_j}$.

\smallskip
We can choose a weakly holomorphic section $\sigma\in H_w^{0,0}(X^\ast,L)$ 
such that $\sigma$ does not vanish in $x_\mu$ and $\ord_{x_j} \sigma\gr\eta_j$ for $j\ungl \mu$. 
This section $\sigma$ exists as we have the natural inclusion (see the construction above)
	\[H_w^{0,0}\left(X^\ast,L\tensor L_{x_\mu}^{-r_\mu}\tensor\bigotimes_{j\ungl \mu}L_{x_j}^{-\eta_j-r_j}\right)\abb H_w^{0,0}(X^\ast,L),\] 
and $\deg L\gg 0$ implies by Lemma \ref{lem:tool} that the natural inclusion 
	\[H_w^{0,0}\left(X^\ast,L\tensor L_{x_\mu}^{-r_\mu-1}\tensor\bigotimes_{j\ungl \mu}L_{x_j}^{-\eta_j-r_j}\right)\abb 
	H_w^{0,0}\left(X^\ast,L\tensor L_{x_\mu}^{-r_\mu}\tensor\bigotimes_{j\ungl \mu}L_{x_j}^{-\eta_j-r_j}\right)\]
is not surjective. 

\smallskip
Note that $\sigma$ is holomorphic on $X-\{x_\mu\}$ but just weakly holomorphic in $x_\mu$.
We will now modify $\sigma$ so that it becomes holomorphic and non-vanishing in $x_\mu$.
Shrink $U_\mu$ such that $\sigma=\sum_{\iota\gr 0} a_\iota \tau^\iota$ on $U_\mu$ with $a_0\ungl 0$. 
Let $\sigma':=\sigma/a_{0}$ so that $\ord_{x_\mu}(\sigma'-1) \geq 1$, 
\ie $\sigma'-1=\sum_{\iota\gr 1} a'_\iota\tau^\iota$ on $U_\mu$. 
Choose as above a $\tilde \sigma\in H_w^{0,0}(X^\ast,L)$ with $\ord_{x_\mu} \tilde\sigma=1$ and $\ord_{x_j} \tilde\sigma\gr\eta_j$ for $j\ungl \mu$. 
Let $\tilde\sigma=\sum_{\iota\gr 1} \tilde a_\iota \tau^\iota$ close to $x_\mu$ with $\tilde a_1\ungl 0$. 
We define $\sigma'':=\sigma'-\frac{a'_{1}}{\tilde a_{1}}\tilde\sigma$. 
Then, $\ord_{x_\mu} (\sigma''-1)\gr 2$ and $\ord_{x_j} \sigma''\gr\eta_j$ for $j\ungl \mu$. 
We repeat this procedure recursively to get a section $\xi=\{\xi_j\} \in H_w^{0,0}(X^\ast, L)$ with $\ord_{x_\mu} (\xi_\mu-1)\gr\eta_\mu$ 
and $\ord_{x_j} \xi_j\gr\eta_j$ for $j\ungl \mu$. Thus, $\xi$ is a holomorphic section on $X$, non-vanishing in $x_\mu$.
That shows (i) for $x=x_\mu$.

\medskip
We will prove (iii) for $x_\mu$. Let $v\in T_{x_\mu} X =(\am_{x_\mu}/\am_{x_\mu}^2)^\ast$ satisfy $v\ungl 0$, \ie there exists an $f\in\am_{x_\mu}$ with $v(f+\am^2_{x_\mu})\neq 0$. We claim
there exists a $g\in\am_{\Phi({x_\mu})}$ with $g\nach \Phi -f \in \am^2_{x_\mu}$. Then $v(g\nach\Phi+\am^2_{x_\mu})=v(f+\am^2_{x_\mu})\neq 0$, \ie $T_x\Phi (v) \neq 0$.

\smallskip
\textit{Proof of the claim:} Replacing 1 with $f=\sum_{\iota\gr 1} f_\iota \tau^{\iota}$, we can repeat the procedure in (i) to construct a section $\xi=\{\xi_j\}\in H_w^{0,0}(X^\ast, L)$
with $\ord_{x_\mu} (\xi_{\mu}-f) \gr 2\eta_\mu$ and $\ord_{x_j} \xi_j\gr\eta_j$ for $j \neq \mu$. We get $\xi$ is holomorphic, $\xi_\mu\in\am_{x_\mu}$ and $\xi_\mu-f\in\am^2_{x_\mu}$.
Let $\Phi$ be defined by $\Phi(x)=[s_0(x){:}...{:}s_N(x)]$ with holomorphic sections $s_i=\{s_{ij}\}$ (see \eqref{eq:Phi}). Hence, we can choose a vector $(g_0,...,g_N)\in\CC^{N{+}1}$ such that $\xi=\sum_i g_i s_i$. 
Because of (i), there exits an $i_0$ such that $c:=s_{i_0}(x_\mu)\ungl 0$ -- we can assume $i_0=0$. We set $U:=\{x\in U_\mu\colon s_{0\mu}(x) \ungl 0\}$ and identify $\{[t_0{:}...{:}t_N]\colon t_{0}=1\}\aus\CP^N$ with $\CC^N$ such that $\Phi|_U\colon U\abb \CC^N$ is defined by $\Phi(x)=\left(\frac{s_{1\mu}(x)}{s_{0\mu}(x)},...,\frac{s_{N\mu}(x)}{s_{0\mu}(x)}\right)$.
Let $g\colon \CC^N \abb \CC$ denote the holomorphic function $g(t_1,...,t_N):=c\cdot (g_0+\sum_{i{=}1}^N g_i t_i)$, \ie
	\[s_{0\mu}\cdot (g\nach \Phi|_{U})=c\,\sum_{i=0}^N g_is_{i\mu}=c\cdot\xi_\mu\]
on $U$. Since $c\,{=}\,s_{0\mu}(x_\mu)\,{\neq}\,0$ and since $f$ and $\frac{c}{s_{0\mu}}{-}1$ are in $\am_{x_\mu}$, we get $g\in\am_{\Phi(x_\mu)}$ and
	\[g\nach \Phi -f = \frac{c}{s_{0\mu}}\left(\xi_\mu -  f\right) + f\cdot\Big({\frac{c}{s_{0\mu}}}-1\Big) \in\am^2_{x_\mu}.\]
\FormelQed
\end{proofX}
\medskip
For this proof, $L$ has to satisfy
	 \[\deg L>2g+\max\{\eta_j\}+\sum_{j=1}^k(\eta_j{+}r_j)-\deg(Z{-}|Z|)=2g+k+\max\{\eta_j\}+\sum_{j=1}^k\eta_j.\]

\newpage
\providecommand{\bysame}{\leavevmode\hbox to3em{\hrulefill}\thinspace}

\end{document}